\definecolor{wine-stain}{rgb}{0.5,0,0}
\newtheorem{thm}{Theorem}[section]
\newtheorem{cor}[thm]{Corollary}
\newtheorem{lem}[thm]{Lemma}
\newtheorem{prop}[thm]{Proposition}
\theoremstyle{definition}
\theoremstyle{definition}
\newtheorem{rem}[thm]{Remark}
\theoremstyle{definition}
\newtheorem{exam}[thm]{Example}
\def\R{\mathbb R}
\def\e{\mathbf e}
\begin{document}

\title[Equiaffine Darboux Frames for Codimension $2$ Submanifolds]{Equiaffine Darboux Frames for Codimension $2$ Submanifolds contained in Hypersurfaces}
{\color{red}
\author[M.Craizer]{Marcos Craizer}
\address{%
Pontif\'icia Universidade Cat\'olica do Rio de Janeiro, Departamento de Matem\'atica, 22453-900 Rio de janeiro (RJ), BRAZIL}
\email{craizer@puc-rio.br}

\author[M.J.Saia]{Marcelo J. Saia}
\address{Universidade de S\~ao Paulo, ICMC-SMA, Caixa Postal 668,
13560-970 S\~ao Carlos (SP), BRAZIL}
\email{mjsaia@icmc.usp.br}

\author[L.F.Sánchez]{Luis F. Sánchez}
\address{Universidade Federal de Uberlândia, FAMAT, Departamento de Matemática,  Rua Goiás 2000, 38500-000 Monte Carmelo (MG),  BRAZIL}
\email{luis.sanchez@ufu.br}

\thanks{The first author wants to thank CNPq for financial support during the preparation of this manuscript. The second and third authors  have been partially supported by CAPES, FAPESP and CNPq.}
}

\subjclass{ 53A15}

\keywords{ Darboux frames, Developable tangent surfaces, Visual contours, Transon planes, Equiaffine metrics. }

\date{December 23, 2014}

\begin{abstract}
Consider a codimension $1$ submanifold $N^n\subset M^{n+1}$, where $M^{n+1}\subset\R^{n+2}$ is a hypersurface. The envelope of tangent spaces of $M$ along $N$ generalizes 
the concept of tangent developable surface of a surface along a curve. In this paper, we study the singularities of these envelopes. 

There are some important examples of submanifolds that admit a vector field tangent to $M$ and transversal to $N$ whose derivative in any direction of $N$ is contained in $N$.
When this is the case, one can construct transversal plane bundles and affine metrics on $N$ with the desirable properties of being equiaffine and apolar. Moreover,
this transversal bundle coincides with the classical notion of Transon plane. But we also give an explicit example of a submanifold that do not admit a vector field with the above property.
\end{abstract}

\maketitle

\section{Introduction}

Consider a surface $M\subset\R^3$ and let $\gamma:I\to\R$ be a smooth curve, where $I\subset\R$ is an interval. Under some general hypothesis, one can find a
unique, up to sign, vector field $\xi$ tangent to $M$ along $\gamma$ such that $\xi'(t)$ is tangent to $\gamma$, for any $t\in I$. The developable surface
$$
OD_{\gamma}(t,u)=\gamma(t)+u\xi(t),\ \ t\in I,\ \  u\in\R,
$$
is called the {\it tangent developable} of $M$ along $\gamma$ and has been extensively studied. The natural Darboux frame to consider here
is $\{\gamma'(t),\gamma''(t),\xi(t)\}$, where the parameterization $\gamma(t)$ satisfies $\gamma'''(t)$ tangent to $M$. 
Writing $\xi'(t)=-\sigma(t)\gamma'(t)$, we have that, 
from the point of view of singularity theory, the behavior of the $OD_{\gamma}$ in a neighborhood of $u=\sigma^{-1}(t)$ depends basically of the number of vanishing derivatives of
$\sigma$ (\cite{Izu2},\cite{Izu3}).

In this paper, we want to generalize this construction to arbitrary codimension $1$ submanifolds $N^n\subset M^{n+1}$, where $M^{n+1}\subset\R^{n+2}$ is a hypersurface.
As we shall see, under general hypothesis there exists a vector field $\xi$ tangent to $M$ along $N$ such that $D_X\xi$ is tangent to $M$, for any $X$ tangent to $N$. The direction determined by $\xi$ is unique and we call it the {\it (osculating) Darboux direction}. The hypersurface
$$
ET_N(p,u)=p+u\xi(p), \ \ p\in N,\ \ u\in\R,
$$
generalizes the tangent developable surface of a surface along a curve and we shall call {\it envelope of tangent spaces} of $M$ along $N$. We shall verify that $ET_N$ is smooth
when $u$ is not the inverse of a non-zero eigenvalue of the shape operator associated with $\xi$. The singularities of $ET_N$ are also studied: We show by examples that all simple
singularities are realizable.

For a fixed vector field $\xi$ in the osculating Darboux direction, we can define a metric $g=g_{\xi}$ on $N$ and a distinguished transversal plane bundle $\sigma=\sigma(\xi)$, the {\it affine normal plane bundle}. It is natural to consider a basis $\{\xi,\eta\}$ of $\sigma$ such that $[X_1,...,X_n,\eta,\xi]=1$, where $\{X_1,...,X_n\}$ is a $g_{\xi}$-orthonormal basis of $TN$. But this condition does not determine uniquely the vector $\eta$. In fact, any $\bar\eta=\eta+\lambda\xi$ does the same job. 

The main difficulty here is to choose 
a "good" vector field $\xi$ in the Darboux direction. 
In the case of curves, we can choose a vector field $\xi$ in the Darboux direction such that $D_X\xi$ is tangent to $N$, for any $X$ tangent to $N$. We shall refer to this latter property by saying that $\xi$ is {\it parallel}. On the other hand, when $M$ is non-degenerate, we can choose
$\xi$ such that $g_{\xi}$ coincides with the restriction of the Blaschke metric of $M$ to $N$ and the affine Blaschke normal belongs to the affine normal plane bundle. But this choice of $\xi$ is parallel only in very special cases.

The metric $g_{\xi}$ and the affine transversal plane bundle $\sigma(\xi)$ have more desirable properties when $\xi$ is parallel. 
Denote by  $\omega=\omega(g_{\xi})$ the volume form on $N$ determined by the metric $g_{\xi}$ and by $\nabla=\nabla(\xi)$ the
connection determined by $\sigma$.  The pair $(\nabla,g)(\xi)$ is equiaffine if $\nabla(\omega_g)=0$. We can also define the cubic form $C^2$ and the metric $h^2$ relative to
the vector field $\eta=\eta(\xi)$. We say that $(C^2,h^2)$ is apolar if $tr_{h^2}C^2(X,\cdot,\cdot)=0$, for any $X$ tangent to $N$. We shall verify that the properties
$(\nabla,g)$ equiaffine and $(C^2,h^2)$ apolar are both equivalent to the parallelism of $\xi$.

Consider now hyperplanes $H$ containing the tangent space of $N$. The intersection of $H$ with $M$ determine a codimension $1$ submanifold of the hyperplane $H$ and thus we can consider its Blaschke normal vector $\eta(H)$. When we vary $H$, the vector $\eta(H)$ describes a $2$-plane called the {\it Transon plane} of $T_pN$ with respect to $M$. In the case of curves, this is a very classic result of A.Transon (\cite{Transon}), see \cite{Juttler} for a modern reference. We shall verify that the Transon plane coincides with the affine normal plane if and only if $\xi$ is parallel.

The latter two paragraphs show that the condition of $\xi$ parallel is very significant. So it is natural to ask whether, for a given immersion $N\subset M$, a parallel vector field $\xi$ exists or not. There are several examples of immersions $N\subset M$ that admit a parallel vector field: Curves in surfaces, submanifolds contained in hyperplanes, visual contour submanifolds, submanifolds contained in hyperquadrics. But there are also examples of immersions that do not admit parallel vector fields, and we give explicitly such an example. We prove also that the existence of a parallel vector field is equivalent to the flatness of the affine normal bundle of the immersion $N\subset M$.

 The paper is organized as follows: In section 2  we discuss the generalization of the osculating Darboux direction
 and tangent developable surfaces to codimension $2$ submanifolds contained in hypersurfaces. In section 3, we study the singularities of these hypersurfaces. In section 4, we recall the constructions of the affine metric and the affine normal plane bundle associated with a vector field $\xi$ in the osculating Darboux direction. In section 5 we define the parallelism condition of $\xi$ and show the equivalence of this property and the equiaffinity of $(\nabla,g)$ and the apolarity of $(C^2,h^2)$. We also give some important examples
of immersions $N\subset M$ with parallel vector fields. In section 6 we recall the notion of Transon planes and prove that it coincides with the affine normal plane if and only if $\xi$ is parallel. Finally in section 7 we show an example of an immersion $N\subset M$ that does not admit a parallel vector field and prove that the existence of a parallel vector field is equivalent to the flatness of the affine normal bundle connection.

\section{Darboux directions and the envelope of tangent spaces}

Consider a codimension $1$ immersion $N^n\subset M^{n+1}$, where $M^{n+1}\subset\R^{n+2}$ is a hypersurface.

\subsection{Basic equations}

Fix vector fields $\eta$ transversal to $M$ and $\xi$ tangent to $M$ transversal to $N$. For $X,Y$ vector fields tangent to $N$, we write
\begin{equation}\label{eq:Decomposition}
D_XY=\nabla_XY+h^1(X,Y)\xi+h^2(X,Y)\eta,
\end{equation}
where $\nabla_XY$ is tangent to $N$. It is straightforward to verify that $\nabla$ is a torsion-free connection on $N$ and $h^i$, $i=1,2$ are bilinear symmetric forms. 
Write also
\begin{equation}
\nabla_{X_i}X_j=\sum_{k=1}^n \Gamma_{ij}^k X_k, \  \  1\leq i,j\leq n,
\end{equation}
where $\Gamma_{ij}^k$ are the Christoffel symbols of the connection.

The derivatives of $\eta$ and $\xi$ can be written as
\begin{equation}\label{eq:DXXi}
\begin{array}{c}
D_X\xi=-S_1X+\tau_1^1(X)\xi+\tau_1^2(X)\eta,\\
D_X\eta=-S_2X+\tau_2^1(X)\xi+\tau_2^2(X)\eta,
\end{array}
\end{equation}
where $S_i$, $i=1,2$, are $(1,1)$-tensors of $N$ called {\it shape operators} and $\tau_i^j$ are $1$-forms on $N$.

\subsection{Osculating Darboux direction}\label{sec:Xi}

In this section, we generalize the notion of osculating Darboux direction from curves $\gamma\subset M$ to codimension $1$ submanifolds $N\subset M$.
Given a local frame $\{X_1,...,X_n\}$ of $TN$, we say that the immersion $N\subset M\subset\R^{n+2}$ is {\it non-degenerate} if the matrix
$h^2(X_i,X_j)$ is non-degenerate.

\begin{lem}
The non-degeneracy condition is independent of the choice of the local frame $\{X_1,...,X_n\}$ of $TN$, of the vector field $\xi$ tangent to $M$ and of the transversal vector field $\eta$.
\end{lem}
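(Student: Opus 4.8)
The plan is to separate the three independent choices --- the frame $\{X_1,\dots,X_n\}$, the field $\xi$, and the transversal field $\eta$ --- and to check invariance of the non-degeneracy of the matrix $\big(h^2(X_i,X_j)\big)$ under each of them separately, an arbitrary change being a composition of these three elementary ones. The guiding observation is that $h^2$ is, up to a nowhere-vanishing scalar factor coming from the choice of $\eta$, nothing but the second fundamental form of the hypersurface $M\subset\R^{n+2}$ restricted to pairs of vectors tangent to $N$; in particular $\xi$ does not enter its definition in any essential way, which is why the independence on $\xi$ should be immediate.

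First I would dispose of the frame change. If $\tilde X_i=\sum_j A_i^j X_j$ with $A=(A_i^j)$ pointwise invertible, then bilinearity of $h^2$ gives $h^2(\tilde X_i,\tilde X_j)=\sum_{k,l}A_i^k A_j^l\, h^2(X_k,X_l)$, so the Gram matrix transforms by $\tilde H=A H A^{t}$; since $\det A\neq 0$ everywhere, $\tilde H$ is non-degenerate exactly when $H$ is.

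Next I would treat the choice of $\xi$. Because $\xi$ is tangent to $M$ and transversal to $N$ we have $TM|_N=TN\oplus\langle\xi\rangle$, so any competing field is $\tilde\xi=c\,\xi+V$ with $c$ nowhere zero and $V$ tangent to $N$. Substituting $\xi=c^{-1}(\tilde\xi-V)$ into \eqref{eq:Decomposition} and regrouping, the coefficient of $\eta$ is unchanged, i.e. $\tilde h^2=h^2$, only $\nabla$ and $h^1$ being modified. For the choice of $\eta$, using $\R^{n+2}|_N=TM|_N\oplus\langle\eta\rangle$ together with $TM|_N=TN\oplus\langle\xi\rangle$, any field transversal to $M$ can be written $\tilde\eta=a\,\eta+b\,\xi+W$ with $a$ nowhere zero and $W$ tangent to $N$. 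Substituting $\eta=a^{-1}(\tilde\eta-b\,\xi-W)$ into \eqref{eq:Decomposition} and regrouping gives $\tilde h^2=a^{-1}h^2$, and since $a$ never vanishes this preserves non-degeneracy.

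The only point demanding a little care --- though I do not expect it to be a genuine obstacle --- is the bookkeeping in the last two substitutions: one must check that the tangential corrections $V$ and $W$ are absorbed exactly by the connection term (and, for $W$, partly by the $\xi$-term), and that the $\xi$-correction $b$ is absorbed by $h^1$, so that nothing spills into the $\eta$-component beyond the scalar factor $a^{-1}$. Once invariance under each of the three elementary changes is in place, the general statement follows by composing them.
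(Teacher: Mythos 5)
Your proposal is correct and follows essentially the same route as the paper: check invariance separately under a frame change (giving $\tilde H = AHA^{t}$), a change of $\xi$ (leaving $h^2$ unchanged), and a change of $\eta$ (rescaling $h^2$ by a nowhere-vanishing factor). The bookkeeping you flag at the end works out exactly as you expect, with the tangential and $\xi$-corrections absorbed by $\nabla$ and $h^1$.
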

\begin{proof}
Suppose we fix $\xi$ and $\eta$ and let $\{Y_1,...,Y_n\}$ be a local frame of $TN$. Then we can write
$$
Y_i=\sum_{i=1}^n a_{ij}X_j
$$
for a certain invertible matrix $A=(a_{ij})$. It is not difficult to verify that $\left( h^2(Y_i,Y_j) \right)=A\left( h^2(X_i,X_j) \right)A^{t}$, thus proving that the non-degeneracy condition
is invariant by a change of basis of $TN$. If we change $\xi$ by $\bar\xi$ satisfying
$$
\xi=\sum b_{k}X_k+\beta\bar\xi,
$$
then $\bar{h^2}(X_i,X_j)=h^2(X_i,X_j)$ and so the non-degeneracy condition is invariant by the choice of the vector $\xi$. Finally if we write
$$
\eta=\sum b_kX_k+\beta\xi+\gamma\bar\eta,
$$
then $\bar{h^2}(X_i,X_j)=\gamma h^2(X_i,X_j)$, thus completing the proof of the lemma.
\end{proof}

\begin{prop}\label{prop:ExistenceXi}
Assume that the immersion $N^n\subset M^{n+1}\subset\R^{n+2}$ is non-degenerate.
There exist a unique direction $\xi$ tangent to $M$ along $N$ and transversal to $N$ such that $D_X\xi$ is tangent to $M$, for any $X\in T_pN$. We shall call this direction
the {\it osculating Darboux direction} of $N\subset M$.
\end{prop}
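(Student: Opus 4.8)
The plan is to work with the basic equations \eqref{eq:Decomposition} and \eqref{eq:DXXi} and determine exactly when a tangent-to-$M$, transversal-to-$N$ vector field has the property that its derivative stays tangent to $M$. Fix an auxiliary choice of $\xi_0$ tangent to $M$ and transversal to $N$, and of $\eta$ transversal to $M$; any candidate for the Darboux direction is then, up to scaling, of the form $\bar\xi=\xi_0+\sum_k b^k X_k$ for functions $b^k$ on $N$ (adding a tangential component to $\xi_0$, since scaling does not affect the direction and any transversal-to-$N$, tangent-to-$M$ vector differs from $\xi_0$ by something tangent to $N$). The condition ``$D_X\bar\xi$ tangent to $M$ for all $X\in TN$'' means precisely that the $\eta$-component of $D_X\bar\xi$ vanishes identically.

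First I would compute that $\eta$-component. Using \eqref{eq:Decomposition} for $D_X X_k$ and \eqref{eq:DXXi} for $D_X\xi_0$, one gets, for $X=X_i$,
\[
D_{X_i}\bar\xi \;=\; D_{X_i}\xi_0 + \sum_k (X_i b^k) X_k + \sum_k b^k D_{X_i}X_k,
\]
whose $\eta$-component is $\tau_0^2(X_i) + \sum_k b^k\, h^2(X_i,X_k)$, where $\tau_0^2$ is the $\eta$-form attached to $\xi_0$ in \eqref{eq:DXXi}. Requiring this to vanish for every $i$ gives the linear system
\[
\sum_{k=1}^n h^2(X_i,X_k)\, b^k \;=\; -\,\tau_0^2(X_i), \qquad 1\le i\le n.
\]
By the non-degeneracy hypothesis the matrix $\big(h^2(X_i,X_k)\big)$ is invertible, so this system has a unique solution $(b^1,\dots,b^n)$, which determines $\bar\xi$ uniquely among vector fields of the normalized form $\xi_0+\sum_k b^k X_k$, hence determines the direction $[\bar\xi]$ uniquely.

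To finish, I would check the two loose ends: existence (the solution $b^k$ depends smoothly on $p$ since $h^2$ and $\tau_0^2$ are smooth and $h^2$ is invertible, so $\bar\xi$ is a genuine smooth local vector field, and the direction it spans is well defined globally on $N$ even though $\xi_0$ is only local — patching works because the direction is intrinsic), and independence of the auxiliary choices (a different $\xi_0$ changes $\tau_0^2$ but, by the computation above and the argument already used in the preceding Lemma that $h^2$ is unchanged under $\xi_0\mapsto\xi_0+\sum c^kX_k$, yields the same $\bar\xi$; a different $\eta$ rescales $\tau_0^2$ and $h^2$ by the same factor $\gamma$, leaving $b^k$ unchanged). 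The only mildly delicate point — the main obstacle, such as it is — is bookkeeping the transversal decomposition carefully enough to see that the coefficient of $\eta$ in $D_{X_i}\bar\xi$ really is $\tau_0^2(X_i)+\sum_k b^k h^2(X_i,X_k)$ with no contribution from $\nabla$ or from the $\xi$-components; once that identity is in hand, non-degeneracy does all the work.
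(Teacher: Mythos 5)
Your proposal is correct and follows essentially the same route as the paper: write the candidate as a fixed tangent-to-$M$ reference field plus a tangential correction, observe that the $\eta$-component of its derivative is $\tau_1^2(X_i)+\sum_k b^k h^2(X_i,X_k)$, and invoke non-degeneracy of $(h^2(X_i,X_k))$ to solve the resulting linear system uniquely. The extra remarks on smoothness, patching, and independence of the auxiliary choices are sound and only make explicit what the paper leaves implicit.
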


\begin{proof}
We first remark that if $D_X\xi$ is tangent to $M$ for any $X\in T_pM$, the same holds for $\lambda\xi$, for any $\lambda:N\to\R$. Take any $\xi_1$ tangent to $M$ and write
$$
\xi=\sum_{j=1}^n\alpha_j X_j+\xi_1.
$$
Then the component of $D_{X_i}\xi$ in the direction $\eta$ is
$ \sum_{j=1}^n\alpha_jh^2(X_i,X_j)+ \tau_1^2(X_i) $.
Thus we have to solve the system
$$
\sum_{j=1}^n \alpha_jh^2(X_i,X_j)+ \tau_1^2(X_i)=0,\ \  1\leq i\leq n,
$$
which admits a unique solution by the non-degeneracy hypothesis.
\end{proof}

\begin{rem}
In the case of curves, the non-degeneracy hypothesis is equivalent to $\gamma''(t)\not\in T_{\gamma(t)}M$, i.e., the osculating plane of $\gamma$ does not coincide 
with the tangent plane of $M$.
\end{rem}

Next example shows that the non-degeneracy hypothesis is necessary:

\begin{exam}
Consider $M$ given by $\psi(u,v)=(u,v,uv)$ and $N$ given by $\gamma(v)=(0,v,0)$. Any tangent vector field along $\gamma$ can be written as $B(v)=a(1,0,v)+b(0,1,0)$.
Thus $B'(v)=(a'(v), b'(v), a(v)+a'(v)v)$. We conclude that $B'(0)=(a'(0),b'(0), a(0))$ is tangent to $M$ if and only if $a(0)=0$. But then $B$ is tangent to $\gamma$.
\end{exam}

\begin{rem}\label{rem:BlaschkeHyp}
The hypersurface $M\subset\R^{n+2}$ is called {\it non-degenerate} if the $(n+1)\times(n+1)$ matrix
$(h(X_i,X_j))$ is invertible, where
$$
D_{X_i}X_j={\tilde\nabla}_{X_i}X_j+h(X_i,X_j)\eta,
$$
${\tilde\nabla}_{X_i}X_j$ is tangent to $M$ and $X_{n+1}=\xi$. In this case, the osculating Darboux direction $\xi$ is $h$-orthogonal to the tangent space of $N$. In fact,
since $h(X,\xi)=0$, for any $X\in TN$, we have that $D_{X}\xi=\tilde\nabla_XX_{n+1}$ is tangent to $M$.
\end{rem}

\begin{rem}
To define the osculating Darboux direction $\xi$ we need only to know the tangent space to $M$ at each point of $N$. Thus, instead starting with a codimension $2$ submanifold
$N$ contained in a hypersurface $M$, we could also have started with a codimension $2$ submanifold $N$ together with a hyperspace bundle containing the tangent space of $N$, 
without an explicit mention to $M$.
\end{rem}


\subsection{Envelope of Tangent Spaces of $M$ along $N$}\label{sec:TangentHyp}

Consider a curve $\gamma\subset$ where $M$ is a surface in $\R^3$. The surface 
\begin{equation}
OD_{\gamma}(t,u)=\gamma(t)+u\xi(t)
\end{equation}
is called the {\it developable tangent surface} of $M$ along $\gamma$ and has been studied by many authors (\cite{Izu2},\cite{Izu3}).

We can generalize the above definition to arbitrary dimensions. Let $\phi:U\subset\R^n\to \R^{n+2}$ be a parameterization of $N$ and define the {\it envelope of tangent spaces} of $M$ along $N$ by
\begin{equation}\label{eq:TangentHyp}
ET_{N}(t,u)=\phi(t)+u\xi(t)
\end{equation}
for $t\in U$ and $\xi(t)$ in the osculating Darboux direction.

Denote by $\{X_1,...,X_n\}$ the local frame of $N$ given by $X_i=D_{t_i}\phi$.
The hypersurface $ET_N$ can be studied by considering $F:U\times\R^{n+2}\to\R$ defined by
\begin{equation}\label{eq:DefineFH}
F(t,x)=\left[  X_1(t),...,X_n(t), \xi(t), x-\phi(t)  \right].
\end{equation}
Observe that $F=0$ is the equation of the tangent space of $M$ at a point of $N$. The {\it discriminant set} or {\it envelope} of $F$ is defined by
\begin{multline}
\mathcal{D}_F=\{x\in\R^{n+2}| F(t,x)=F_{t_1}(t,x)=...=F_{t_n}(t,x)=0, \text{for some}\\ t=(t_1,...,t_n)\in U\}.
\end{multline}
Next lemma justifies the name envelope of tangent spaces given to $ET_N$. 

\begin{lem}
The envelope $\mathcal{D}_F$ of $F$ coincides with the hypersurface defined by equation \eqref{eq:TangentHyp}. 
\end{lem}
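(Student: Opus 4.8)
The plan is to show the two sets of the equality are contained in one another by unwinding the definition of $F$ in \eqref{eq:DefineFH}. First I would compute the partial derivatives $F_{t_i}$. Differentiating the determinant in \eqref{eq:DefineFH} with respect to $t_i$ and using the product rule on the columns, each term in which a column $X_1,\dots,X_n$ or $\xi$ is differentiated produces a determinant of $n+2$ vectors; since $D_{t_i}X_j$, $D_{t_i}\xi$ all lie in the tangent space of $M$ along $N$ (the latter by the defining property of the osculating Darboux direction, Proposition~\ref{prop:ExistenceXi}), and that tangent space is spanned by $X_1,\dots,X_n,\xi$, each such column is a linear combination of the other columns already present, so those determinants vanish. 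The only surviving term is the one where the last column is differentiated, giving $F_{t_i}(t,x)=\left[X_1(t),\dots,X_n(t),\xi(t),-D_{t_i}\phi(t)\right]=-\left[X_1,\dots,X_n,\xi,X_i\right]=0$ identically, because $X_i=D_{t_i}\phi$ coincides with one of the first $n$ columns. Hence $F_{t_i}\equiv 0$ on all of $U\times\R^{n+2}$, and the conditions $F_{t_i}(t,x)=0$ in the definition of $\mathcal{D}_F$ are automatically satisfied.

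Consequently $\mathcal{D}_F=\{x\in\R^{n+2}\mid F(t,x)=0\text{ for some }t\in U\}$. It remains to identify this with the image of $ET_N$. If $x=ET_N(t,u)=\phi(t)+u\xi(t)$ for some $t\in U$, $u\in\R$, then $x-\phi(t)=u\xi(t)$, so the last column of the determinant in \eqref{eq:DefineFH} is a scalar multiple of the column $\xi(t)$, and $F(t,x)=0$; thus $\operatorname{im}(ET_N)\subset\mathcal{D}_F$. Conversely, if $F(t,x)=0$ for some $t$, then the vectors $X_1(t),\dots,X_n(t),\xi(t),x-\phi(t)$ are linearly dependent in $\R^{n+2}$; since $X_1(t),\dots,X_n(t)$ span $T_pN$ and $\xi(t)$ is transversal to $N$ (tangent to $M$, transversal to $N$), the first $n+1$ of these vectors are linearly independent, so $x-\phi(t)$ must lie in their span, i.e. $x-\phi(t)=\sum_{j}c_jX_j(t)+u\xi(t)$ for some scalars. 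I would then argue that the $X_j$-components vanish: equivalently, $x-\phi(t)$ lies in the tangent space of $M$ at $\phi(t)$, hence in the affine tangent hyperplane $\phi(t)+T_{\phi(t)}M$; but $\mathcal{D}_F$ is by construction contained in this affine hyperplane, and within it the normal direction is $\xi$ — more carefully, one notes that the envelope condition picks out exactly $x$ on the line $\phi(t)+\R\,\xi(t)$. I'd spell this out by observing that the intersection of the nearby tangent hyperplanes forces the coefficients $c_j$ to be zero (this is the standard envelope computation), giving $x=\phi(t)+u\xi(t)=ET_N(t,u)$.

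The main obstacle — really the only subtle point — is the last step: ruling out the tangential components $c_j$ and thereby showing that a point of $\mathcal{D}_F$ actually sits on the ruling line rather than merely in the tangent hyperplane. The cleanest fix is to enlarge the family: instead of the single function $F$, consider that $x\in\mathcal D_F$ means $x-\phi(t)\in T_{\phi(t)}M$ for some $t$ such that, additionally, $x$ lies in the limit of tangent hyperplanes at neighboring points. Since $F_{t_i}\equiv 0$, the naive partial-derivative conditions carry no information, so one must instead use non-degeneracy: the map $t\mapsto T_{\phi(t)}M$ (equivalently its conormal) has differential of full rank transverse to $N$ precisely by the non-degeneracy hypothesis guaranteeing $\xi$ is well defined, and this pins down the $c_j=0$. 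I expect the author handles this either by a direct rank argument or by simply taking \eqref{eq:TangentHyp} as the definition of the ruled hypersurface and checking set equality as above; I would present the forward inclusion fully and treat the reverse inclusion via the non-degeneracy-based rank argument, flagging it as the heart of the matter.
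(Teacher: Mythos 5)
There is a genuine error at the very first step, and it undermines the whole argument. You claim that when you differentiate a column $X_j$ of the determinant, the resulting vector $D_{t_i}X_j$ lies in the tangent space of $M$, so that the corresponding determinant vanishes. That is false: by the structure equation \eqref{eq:Decomposition}, $D_{X_i}X_j=\nabla_{X_i}X_j+h^1(X_i,X_j)\xi+h^2(X_i,X_j)\eta$, and the component $h^2(X_i,X_j)\eta$ is \emph{transversal} to $M$. The non-degeneracy hypothesis says precisely that the matrix $\left(h^2(X_i,X_j)\right)$ is invertible, so these transversal components cannot be discarded. Only the column $\xi$ has a derivative tangent to $M$ (that is the defining property of the osculating Darboux direction); the columns $X_j$ do not. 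Consequently $F_{t_i}$ is \emph{not} identically zero. The correct computation, which is the paper's proof, gives
\begin{equation*}
F_{t_i}(t,x)=a_i(t)\,F(t,x)+\sum_{l=1}^{n}h^2(X_i,X_l)\left[X_1,\dots,\eta,\dots,X_n,\xi,x-\phi(t)\right],
\end{equation*}
with $\eta$ in slot $l$. Invertibility of $\left(h^2(X_i,X_j)\right)$ then shows that $F=F_{t_1}=\dots=F_{t_n}=0$ is equivalent to the vanishing of $F$ together with all the determinants $G_l=\left[X_1,\dots,\eta,\dots,X_n,\xi,x-\phi(t)\right]$, and these $n+1$ conditions force $x-\phi(t)$ to be a multiple of $\xi(t)$.

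Your own difficulty in the second half is a symptom of this mistake. If $F_{t_i}$ really were identically zero, then $\mathcal{D}_F$ would be the union of all affine tangent hyperplanes of $M$ along $N$, which is an $(n+1)$-parameter family of $(n+1)$-planes in $\R^{n+2}$ and hence generically an open set, not a hypersurface --- so the lemma could not possibly hold. The tangential coefficients $c_j$ that you could not eliminate are exactly the information carried by the conditions $G_l=0$, i.e.\ by the derivative conditions $F_{t_i}=0$ that your computation erased. There is no separate ``rank argument'' to supply; the fix is to redo the differentiation keeping the $h^2(X_i,X_j)\eta$ terms.
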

\begin{proof}
Observe that
\begin{equation}
F_{t_i}(t,x)=a_i(t) F(t,x) +\sum_{l=1}^n h^2(X_i,X_l)\left[X_1,...,\eta,...,X_n, \xi, x-\phi(t) \right],
\end{equation}
where $a_i=\tau_1^1(X_i)+\sum_{l=1}^n\Gamma_{li}^i$, and $\eta$ is placed in the coordinate $l$ in the second parcel of the second member. 
Since the matrix $(h^2(X_i,X_j))$ is non-degenerate, $F=F_{t_1}=...=F_{t_n}=0$ if and only if $F=G_1=...=G_n=0$, where
\begin{equation}\label{eq:DerivativeFH}
G_l=\left[X_1,...,\eta,...,X_n, \xi, x-\phi(t) \right].
\end{equation}
This implies $x-\phi(t)=u\xi$, for some $u\in\R$, thus proving the lemma.
\end{proof}

The regression points of $F$ are points of its discriminant set where its hessian $D_{tt}F$ is degenerate.

\begin{lem}
The regression points of $F$ correspond to points where $u$ is the inverse of some non-zero eigenvalue of $S_1$.
\end{lem}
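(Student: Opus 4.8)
The plan is to compute the Hessian $D_{tt}F$ along the discriminant set and identify its degeneracy with the vanishing of $\det(\mathrm{Id} - u S_1)$. From the previous lemma's proof we already have the first derivatives
$$
F_{t_i}(t,x) = a_i(t)F(t,x) + \sum_{l=1}^n h^2(X_i,X_l)\, G_l(t,x),
$$
with $G_l = \left[X_1,\dots,\eta,\dots,X_n,\xi,x-\phi(t)\right]$ ($\eta$ in slot $l$), so I would differentiate this expression once more with respect to $t_j$. On the discriminant set we may substitute $x - \phi(t) = u\xi$, and there $F = 0$ and $G_l = 0$ for all $l$; hence all terms in $D_{t_j}D_{t_i}F$ that carry an undifferentiated factor of $F$ or of some $G_l$ drop out. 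What survives is $\sum_l h^2(X_i,X_l)\, D_{t_j}G_l$ evaluated at $x-\phi(t)=u\xi$.

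Next I would expand $D_{t_j}G_l$ using the structure equations \eqref{eq:Decomposition}, \eqref{eq:DXXi}: the determinant $G_l$ has entries $X_1,\dots,X_n$ (with $\eta$ in slot $l$), $\xi$, and $x-\phi(t)$, so its $t_j$-derivative is a sum of determinants in which one column is replaced by its derivative. Differentiating the $x-\phi(t)$ column gives $-X_j$, which (when $x-\phi(t)=u\xi$) produces a determinant with $X_j$ in the last slot and $X_k$'s and $\xi$ elsewhere — up to the factor $-u$ the matrix whose entries feed into $h^2$, but here the key contribution comes from differentiating the $\xi$ column: $D_{t_j}\xi = -S_1 X_j + \tau_1^1(X_j)\xi + \tau_1^2(X_j)\eta$. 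The tangential part $-S_1 X_j$ inserted into the $\xi$-slot, together with $x-\phi(t)=u\xi$ in the last slot, and the $\eta$ in slot $l$, yields (after expanding $S_1 X_j = \sum_m (S_1)_j^m X_m$ and using multilinearity) a term proportional to $u\,(S_1)_j^l \cdot \left[X_1,\dots,\eta,\dots,X_n,\xi,\xi\right]$-type cancellations; the net effect, after collecting, is that the surviving bilinear form is
$$
D_{t_i}D_{t_j}F \big|_{\mathcal D_F} = c(t)\sum_{l,m} h^2(X_i,X_l)\big(\delta_l^m - u (S_1)_l^m\big)\, \omega_m,
$$
for a suitable nonzero scalar factor $c(t) = \left[X_1,\dots,X_n,\xi,\eta\right]$ (or a constant multiple thereof) and the remaining determinant data $\omega_m$; the precise bookkeeping is routine multilinear algebra. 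Since the matrix $(h^2(X_i,X_l))$ is non-degenerate and the overall scalar is nonzero, $\det D_{tt}F$ vanishes exactly when $\det(\mathrm{Id} - u S_1) = 0$, i.e. when $u$ is the inverse of a nonzero eigenvalue of $S_1$.

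The main obstacle I anticipate is the careful organization of the multilinear/determinantal expansion of $D_{t_j}G_l$: each $G_l$ is already an $(n{+}2)\times(n{+}2)$ determinant with $\eta$ displacing $X_l$, and differentiating introduces $D_{t_j}X_k = \nabla_{t_j}X_k + h^1(X_j,X_k)\xi + h^2(X_j,X_k)\eta$ in every column, producing many terms. The point is that almost all of them vanish on $\mathcal D_F$ — any determinant with two parallel columns ($\xi$ appearing twice, since both the $\xi$-slot contribution and $x-\phi(t)=u\xi$ give $\xi$), or one reducing to a $G_{l'}$ or to $F$ — so the real work is a disciplined accounting of which terms are genuinely nonzero, and checking that the $\eta$-components $\tau_1^2(X_j)$ and $h^2(X_j,X_k)$ recombine precisely into the factor $(h^2(X_i,X_l))$ times $(\mathrm{Id}-uS_1)$ rather than something more complicated. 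Once that is done, the conclusion is immediate.
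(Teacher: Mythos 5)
Your proposal is correct and follows essentially the same route as the paper: the paper likewise normalizes $[X_1,\dots,X_n,\eta,\xi]=1$, uses the non-degeneracy of $(h^2(X_i,X_j))$ to reduce the degeneracy of $(F_{t_it_j})$ on $\mathcal{D}_F$ to that of $((G_i)_{t_j})$, and computes that only the derivatives of the $\xi$-column (contributing $-S_1X_j$) and of the $x-\phi(t)$ column (contributing $-X_j$) survive, giving the matrix $uS_1-\mathrm{Id}$. The multilinear bookkeeping you defer is exactly the short computation the paper carries out, so no gap remains.
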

\begin{proof}
We may assume that $[X_1,...,X_n,\eta,\xi]=1$. 
At a point of the discriminant set of $F$, we have $F=G_1=...=G_n=0$. Using that the matrix $(h^2(X_i,X_j))$ is non-degenerate, 
it is not difficult to see that, at $\mathcal{D}(F)$, the matrix $(F_{t_it_j})$ is degenerate if and only if the matrix $((G_i)_{t_j})$ is degenerate.
Differentiating equation \eqref{eq:DerivativeFH} we obtain that, at $\mathcal{D}(F)$, 
$$
(G_i)_{t_j}=-\left[X_1,...,\eta,...,X_n, S_1(X_j), x-\phi(t) \right]+\left[X_1,...,\eta,...,X_n, \xi, -X_j \right]
$$
where $\eta$ is placed in coordinate $i$. Thus, at these points, 
$$
(G_i)_{t_j}= us_{ij}-\delta_{ij}
$$
where $s_{ij}$ is the $(i,j)$-entry of the matrix of $S_1$ in basis $\{X_1,...,X_n\}$. We conclude that the matrix $((G_i)_{t_j})$ at $\mathcal{D}(F)$
is exactly $uS_1-Id$, thus proving the lemma.
\end{proof}

Next corollary gives condition for the smoothness of $ET_N$:

\begin{cor}
If $u$ is not the inverse of a non-zero eigenvalue of $S_1$, the hypersurface $ET_N$ is smooth.
\end{cor}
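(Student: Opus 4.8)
The plan is to derive the smoothness of $ET_N$ directly from the two preceding lemmas by a standard implicit-function-theorem argument on the map $(t,u)\mapsto \phi(t)+u\xi(t)$, i.e.\ on the parameterization itself rather than on the generating family $F$. Concretely, I would compute the partial derivatives of $\Phi(t,u):=\phi(t)+u\xi(t)$: for each $i$ one has $\Phi_{t_i}=X_i+u\,D_{t_i}\xi$, and $\Phi_u=\xi$. Using the structure equation \eqref{eq:DXXi}, $D_{t_i}\xi=-S_1X_i+\tau_1^1(X_i)\xi+\tau_1^2(X_i)\eta$, and here $\tau_1^2\equiv 0$ because $\xi$ lies in the osculating Darboux direction (Proposition \ref{prop:ExistenceXi}), so $D_{t_i}\xi=-S_1X_i+\tau_1^1(X_i)\xi$ is tangent to $M$. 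Hence $\Phi_{t_i}=(Id-uS_1)X_i+u\tau_1^1(X_i)\xi$, and $\Phi_u=\xi$.

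Next I would show the $n+1$ vectors $\Phi_{t_1},\dots,\Phi_{t_n},\Phi_u$ are linearly independent in $\R^{n+2}$. Since $\Phi_u=\xi$ is transversal to $N$ while each $\Phi_{t_i}$ is a combination of the $X_j$ and of $\xi$, a relation $\sum_i c_i\Phi_{t_i}+c\,\Phi_u=0$ forces, after projecting modulo $\xi$ onto $T_pN$, the equation $\sum_i c_i (Id-uS_1)X_i=0$, i.e.\ $(Id-uS_1)\big(\sum_i c_iX_i\big)=0$. Because $u$ is not the inverse of a non-zero eigenvalue of $S_1$, the operator $Id-uS_1$ is invertible (its eigenvalues are $1-u\lambda_k$, which vanish only if $\lambda_k\neq 0$ and $u=\lambda_k^{-1}$), so all $c_i=0$, and then $c=0$. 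Therefore the Jacobian of $\Phi$ has full rank $n+1$ at every such point, and by the rank theorem $ET_N$ is locally an embedded smooth hypersurface; this is exactly the statement, and it is consistent with the lemma identifying the regression (singular) points with the locus $\det(uS_1-Id)=0$.

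Alternatively—and this is the route that best fits the narrative of the excerpt—one may simply invoke the preceding lemma: the singular points of the discriminant set $\mathcal D_F$ (which equals $ET_N$ by the earlier lemma) are precisely the regression points, and those are exactly the points where $uS_1-Id$ is degenerate. Off that locus the hessian $D_{tt}F$ is nondegenerate, so $F$ is, up to the unfolding parameters, a nondegenerate family and its discriminant is smooth by the classical theory of generating families (see, e.g., \cite{Izu2,Izu3}); hence the corollary. I would present the direct Jacobian computation as the main argument since it is self-contained.

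The only genuine obstacle is bookkeeping: one must keep careful track of the $\xi$-components (the $\tau_1^1$ and Christoffel contributions) to see that they do not interfere with the rank count—once the projection modulo $\xi$ is taken, invertibility of $Id-uS_1$ does all the work, so there is no real difficulty beyond organizing the linear algebra.
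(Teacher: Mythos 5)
Your main argument is correct, but it takes a genuinely different route from the paper. You work directly with the parameterization $\Phi(t,u)=\phi(t)+u\xi(t)$: using $\tau_1^2=0$ (which indeed follows from $\xi$ being in the osculating Darboux direction) you get $\Phi_{t_i}=(\mathrm{Id}-uS_1)X_i+u\tau_1^1(X_i)\xi$ and $\Phi_u=\xi$, and since these all lie in $T_pM=T_pN\oplus\R\xi$, projecting along $\xi$ reduces linear independence to the invertibility of $\mathrm{Id}-uS_1$, which holds precisely off the locus $u=\lambda_k^{-1}$, $\lambda_k\neq 0$. This shows $\Phi$ is an immersion, hence $ET_N$ is locally a smooth hypersurface. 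The paper instead stays inside the generating-family framework: it forms $G=(F,F_{t_1},\dots,F_{t_n})$, notes $ET_N=G^{-1}(0)$, and shows $0$ is a regular value by combining the preceding lemma (at the discriminant, $F_{tt}$ is degenerate exactly when $uS_1-\mathrm{Id}$ is) with $F_x\neq 0$. The two proofs use the same essential linear algebra --- invertibility of $uS_1-\mathrm{Id}$ --- but yours is more elementary and self-contained, describing $ET_N$ as an immersed image, while the paper's presents it as a regular level set and keeps the argument aligned with the singularity-theoretic machinery used in the rest of Section 3. Two small points: your parenthetical eigenvalue remark implicitly treats $S_1$ as diagonalizable, but the determinant criterion $\det(\mathrm{Id}-uS_1)=u^n\det(u^{-1}\mathrm{Id}-S_1)\neq 0$ covers the general case; and your fallback argument appealing to ``the classical theory of generating families'' is vaguer than the paper's explicit regular-value computation, so you are right to present the Jacobian computation as the main proof.
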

\begin{proof}
Consider the map $G:U\times\R^{n+2}\to\R^{n+1}$ given by
$$
G(t,x)=\left( F(t,x), F_{t_1}(t,x),....,F_{t_n}(t,x) \right).
$$
Then $ET_N=G^{-1}(0)$ and we shall verify that $0$ is a regular value of $G$. But
\[
DG=\left[
\begin{array}{cc}
0 & F_x\\
F_{tt} & F_{xt}
\end{array}
\right].
\]
By the above lemma, $F_{tt}$ is non-degenerate. On the other hand, by considering derivatives in the direction $\eta$ one easily verifies that $F_x\neq 0$.
This shows that $0$ is a regular value of $G$,  thus proving the corollary.
\end{proof}

\section{Singularities of the envelope of tangent spaces}

In this section we study the singularities of $ET_N$. We begin with the case of curves, where a complete classification is given. 
For the general case, we show by examples that any simple singularity can occur. 

\subsection{Singularities of the tangent developable surface}

Let $M\subset\R^3$ be a surface and $\gamma:I\to M$ a smooth curve. 
Denote $S_1\gamma'(t)=-\sigma(t)\gamma'(t)$ and $S_2\gamma'(t)=-\mu(t)\gamma'(t)$, where $\eta(t)=\gamma''(t)$. We may assume 
that $[\gamma'(t),\eta(t),\xi(t)]=1$, for any $t\in I$, which implies that $\tau_1^1+\tau_2^2=0$. The Frenet equations are then
\[
\left\{
\begin{array}{c}
(\gamma')'= \eta\\
\eta'=-\mu\gamma'-\tau_1^1\eta+\tau_2^1\xi\\
\xi'=-\sigma\gamma'+\tau_1^1\xi.
\end{array}
\right. 
\]

\smallskip\noindent
Next proposition is proved in \cite{Izu2} using euclidean invariants. We give here a proof using affine invariants. 

\begin{prop}\label{prop:Sing}
Let $\gamma:I\to M$ be a smooth curve and $t_0\in I$ with $\sigma(t_0)\neq 0$.  For $u_0=\sigma^{-1}(t_0)$, we have that, at $OD_{\gamma}(t_0,u_0)$, 
\begin{enumerate}
\item $OD(\gamma)$ is locally diffeomorphic to a cuspidal edge if $[\sigma_t-\sigma\tau_1^1](t_0)\neq 0$.
\item $OD(\gamma)$ is locally diffeomorphic to a swallowtail if $[\sigma_t-\sigma\tau_1^1](t_0)=0$ and $[\sigma_t-\sigma\tau_1^1]_t(t_0)\neq 0$.
\end{enumerate}
\end{prop}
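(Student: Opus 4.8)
The plan is to reduce the classification to the standard criteria for cuspidal edges and swallowtails applied to the discriminant (envelope) of the one-parameter family $F(t,x)=[\gamma'(t),\xi(t),x-\gamma(t)]$ (the $n=1$ case of \eqref{eq:DefineFH}), following the Legendre/wavefront approach of Izumiya--Arnol'd. Recall from the lemmas of Section~2 that the discriminant $\mathcal{D}_F$ is exactly the surface $OD_\gamma(t,u)=\gamma(t)+u\xi(t)$, and that a point with $\sigma(t_0)\neq 0$ and $u_0=\sigma^{-1}(t_0)$ is a regression point, i.e.\ where $F_{tt}=0$; away from such points $OD_\gamma$ is an immersion. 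So the whole question is the nature of the singularity at $(t_0,u_0)$, and this is governed by the contact order of $F(\cdot,x)$ along the bifurcation set, equivalently by how many derivatives in $t$ of the appropriate scalar function vanish.

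The key computational step is to identify that scalar function. First I would normalize the frame by $[\gamma',\eta,\xi]=1$ with $\eta=\gamma''$, so the Frenet equations displayed above hold and $\tau_1^1+\tau_2^2=0$. Writing $F(t,x)=[\gamma'(t),\xi(t),x-\gamma(t)]$ and differentiating using $(\gamma')'=\eta$ and $\xi'=-\sigma\gamma'+\tau_1^1\xi$, one gets
\begin{equation*}
F_t=\tau_1^1 F+[\eta,\xi,x-\gamma],
\end{equation*}
so on $\mathcal{D}_F$ (where $F=0$) the vanishing of $F_t$ is the vanishing of $G(t,x):=[\eta(t),\xi(t),x-\gamma(t)]$, and then
\begin{equation*}
G_t=[\eta',\xi,x-\gamma]+[\eta,\xi',x-\gamma]+[\eta,\xi,-\gamma'] = -\sigma[\eta,\gamma',x-\gamma]+\cdots ,
\end{equation*}
which on $\mathcal{D}_F$ (using $x-\gamma=u\xi$ and $G=0$) collapses to $G_t=u\sigma-1$, recovering the regression condition $u_0=\sigma^{-1}(t_0)$. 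Differentiating once more along $\mathcal{D}_F$ and evaluating at $(t_0,u_0)$ produces, after substituting $x-\gamma=u_0\xi$ and $u_0\sigma(t_0)=1$, a multiple of $[\sigma_t-\sigma\tau_1^1](t_0)$; the bookkeeping here is just repeated use of the Frenet relations and the normalization $[\gamma',\eta,\xi]=1$. Thus the third $t$-derivative of the defining function of the wavefront is controlled by $\sigma_t-\sigma\tau_1^1$, and its fourth by $[\sigma_t-\sigma\tau_1^1]_t$.

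With this in hand the classification is immediate from the standard recognition theorems: a point of the wavefront $\mathcal{D}_F$ where $F=F_t=0$, $F_{tt}=0$, but the next transversality condition $[\sigma_t-\sigma\tau_1^1](t_0)\neq 0$ holds is an $A_2$ point, hence $OD_\gamma$ is locally diffeomorphic to the cuspidal edge; if additionally $[\sigma_t-\sigma\tau_1^1](t_0)=0$ while $[\sigma_t-\sigma\tau_1^1]_t(t_0)\neq 0$ we are at an $A_3$ point, hence the swallowtail. Concretely I would exhibit the versal unfolding: parametrize $OD_\gamma$ near $(t_0,u_0)$, change coordinates so that the map germ becomes $(t,u)\mapsto(u, t^2+\cdots, t^3+\cdots)$ or $(u,t^2+\cdots,t^4+\cdots)$ with the coefficients of the higher-order terms being nonzero precisely under the stated hypotheses, and invoke the Whitney/Mond normal forms. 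The main obstacle I anticipate is purely organizational rather than conceptual: carefully carrying the lower-order terms through two successive differentiations of the $3\times3$ determinant while substituting the Frenet equations, so as to land exactly on the invariant combination $\sigma_t-\sigma\tau_1^1$ (which, unlike $\sigma_t$ alone, is the one insensitive to the residual scaling freedom $\xi\mapsto\lambda\xi$ compatible with the normalization). Verifying that $\sigma_t-\sigma\tau_1^1$ is the correct equiaffine invariant — equivalently, that it transforms by a nonzero factor under admissible reparametrizations and reframings — is the one point that genuinely needs care; everything after that is the textbook singularity theory of Legendrian/wavefront map germs.
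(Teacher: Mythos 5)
Your plan coincides with the paper's proof essentially step for step: the paper uses the same family $F(t,x)=[\gamma'(t),\xi(t),x-\gamma(t)]$, the same chain of derivatives $F_t=G+\tau_1^1F$, $G_t=H-1-\mu F$ with $H=\sigma[\gamma',\eta,x-\gamma]$, and $H_t=\frac{\sigma_t}{\sigma}H-\tau_1^1H+\tau_2^1\sigma F$, landing on exactly the invariant $\sigma_t-\sigma\tau_1^1$ (and its $t$-derivative) to distinguish $A_2$ from $A_3$, and then concludes via versality of the unfolding. The only point where you are lighter than the paper is the versality check itself, which the paper carries out explicitly by computing $F_x=\gamma'\times\xi$ and verifying that $j^1F_x$ has rank $2$ (resp.\ $j^2F_x$ has rank $3$, using $\sigma(t_0)\neq 0$), whereas you defer this to standard recognition theorems; the route is otherwise identical.
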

 
 \begin{rem}
 We shall see in section \ref{sec:Curves} that it is possible to parameterize $\gamma$ such that $\tau_1^1=0$. With such a parameterization, the formulas
 of the above proposition become much simpler. 
 \end{rem}

\smallskip\noindent
We shall need a well-known result from singularity theory (\cite{Giblin},\cite{Izu2}). 

\begin{lem}\label{lem:unfolding} Let $F:I\times\R^r \to \R$ denote a $r$ parameter unfolding
of $f(t)=F(t,x_0)$. Assume that $f(t)$ has an $A_k$-singularity at $t=t_0$. The unfolding $F(t,x)$ is $\mathcal{R}$-versal if the $k\times r$ matrix $j^{k-1}F_x$ has rank $k$, where $j^kg$ denotes the $k$-jet of $g$. 
\end{lem}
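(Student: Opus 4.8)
The plan is to reduce to the standard infinitesimal criterion for $\mathcal{R}$-versality of unfoldings and then make the relevant local algebra completely explicit for an $A_k$ singularity; the statement is classical (see \cite{Giblin},\cite{Izu2}) and I would only sketch the argument. Since $f$ has an $A_k$-singularity at $t_0$, after translating $t$ and applying an $\mathcal{R}$-change of coordinates in the source it becomes the normal form $f(t)=\pm t^{k+1}$; because $\mathcal{R}$-versality of $F$ is preserved under such coordinate changes in the source, it suffices to prove the criterion for this model. For it $f'(t)=\pm(k+1)t^k$, so the tangent space to the $\mathcal{R}$-orbit is the ideal $\langle f'\rangle=\langle t^k\rangle$ of the ring $\mathcal{E}_1$ of function germs in $t$ at $t_0$, and the quotient $\mathcal{E}_1/\langle t^k\rangle$ is the $k$-dimensional real vector space with basis $\{1,t,\dots,t^{k-1}\}$.

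Next I would invoke the Thom--Mather infinitesimal versality theorem in the form used in \cite{Giblin}: an $r$-parameter unfolding $F(t,x)$ of $f$ is $\mathcal{R}$-versal if and only if the germs $\partial F/\partial x_i(\cdot,x_0)$, $i=1,\dots,r$, span $\mathcal{E}_1/\langle f'\rangle$ over $\R$. Here one has to be attentive to which equivalence group the cited references use --- whether the addition of constants is permitted --- since the statement as given (a $k\times r$ matrix of rank $k$, rather than a $(k-1)\times r$ matrix of rank $k-1$) corresponds exactly to the version in which the $\partial F/\partial x_i$ must generate the full quotient $\mathcal{E}_1/\langle t^k\rangle$, i.e. to $\mathcal{R}$-equivalence without constants; this is the convention that matches the discriminant applications in the following sections.

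Finally I would translate the spanning condition into the rank condition on $j^{k-1}F_x$. Expanding $\partial F/\partial x_i(t,x_0)=\sum_{l\ge 0}c_{li}(t-t_0)^l$, the class of $\partial F/\partial x_i(\cdot,x_0)$ in $\mathcal{E}_1/\langle t^k\rangle$ is recorded precisely by $(c_{0i},\dots,c_{k-1,i})$, which is the $i$-th column of the $k\times r$ matrix $j^{k-1}F_x$ read in the basis $\{1,t,\dots,t^{k-1}\}$; hence the $r$ germs span the $k$-dimensional quotient exactly when that matrix has rank $k$. No appeal to Nakayama's lemma or the Malgrange preparation theorem is needed at this last step, because every element of $\langle t^k\rangle$ has vanishing $(k-1)$-jet, so the quotient is already detected on $(k-1)$-jets; those analytic tools enter only inside the proof of the versality theorem itself. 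The main obstacle is therefore not a computation but stating the background versality theorem in precisely the form required and keeping consistent track of which equivalence group --- hence which quotient vector space --- is in force, so that the asserted ``rank $k$'' rather than ``rank $k-1$'' comes out correctly.
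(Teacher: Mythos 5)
The paper offers no proof of this lemma at all: it is quoted as a well-known fact with references to \cite{Giblin} and \cite{Izu2}, so there is no in-paper argument to compare yours against. Your sketch is the standard and correct derivation: by the Thom--Mather infinitesimal criterion, $\mathcal{R}$-versality of $F$ amounts to the germs $\partial F/\partial x_i(\cdot,x_0)$ spanning $\mathcal{E}_1/\langle f'\rangle$, and for an $A_k$ germ one has $\langle f'\rangle=\langle (t-t_0)^k\rangle$ directly (since $f'$ vanishes to exact order $k$, no normal-form reduction is even needed), so the quotient is the $k$-dimensional space of $(k-1)$-jets and the spanning condition is exactly the rank condition on the $k\times r$ matrix $j^{k-1}F_x$. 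Your point about distinguishing $\mathcal{R}$- from $\mathcal{R}^+$-versality (rank $k$ versus rank $k-1$) is apt and consistent with how the lemma is used in the proof of Proposition \ref{prop:Sing}. Two minor housekeeping items: if you do pass through the normal form $\pm t^{k+1}$, you should note that the rank of $j^{k-1}F_x$ is unchanged by the source coordinate change (the induced map on $(k-1)$-jets is invertible); and the identification of $\mathcal{E}_1/\langle (t-t_0)^k\rangle$ with $(k-1)$-jets needs Hadamard's lemma for the inclusion $\ker j^{k-1}\subseteq\langle (t-t_0)^k\rangle$, not only the trivial reverse inclusion you mention. Neither affects the correctness of the argument.
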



Now we can prove proposition \ref{prop:Sing}.

\begin{proof}
In the case of curves, $F:I\times\R^3 \to \R$  is given by
\begin{equation}\label{eq:FDist}
F(t,x)=\left[ \gamma'(t), \xi(t), x-\gamma(t) \right].
\end{equation}
Then $F_t= G+\tau_1^1F$, where
\begin{equation*}
G(t,x)=\left[ \eta(t), \xi(t), x-\gamma(t) \right].
\end{equation*}
Thus $F=F_t=0$ at $t=t_0$ if and only if $x=\gamma(t_0)+\lambda(t_0)\xi(t_0)$. Moreover $G_{t}=H-1-\mu F$, where
\begin{equation*}
H(t,x)=\sigma(t)\left[ \gamma'(t), \eta(t), x-\gamma(t) \right].
\end{equation*}
Thus $F=F_t=F_{tt}=0$ at $t=t_0$ if and only if $\sigma(t_0)\neq 0$ and $\lambda(t_0)=\sigma^{-1}(t_0)$. 
Differentiating again we obtain
\begin{equation}
H_t=\frac{\sigma_t}{\sigma}H-\tau_1^1H+\tau_2^1\sigma F.
\end{equation}
Thus $F=F_t=F_{tt}=0$ and $F_{ttt}\neq 0$ at $t=t_0$ if and only if $x=\gamma(t_0)+\sigma^{-1}(t_0)\xi(t_0)$ and $\sigma_t-\tau_1^1\sigma\neq 0$. In this case, $F$ has an $A_2$ singularity. Differentiating once more we obtain, at points where $F=F_t=F_{tt}=F_{ttt}=0$, 
\begin{equation*}
H_{tt}(t_0,x)=(\sigma_{t}-\sigma\tau_1^1)_t \sigma^{-1}(t_0).
\end{equation*}
We conclude that $F=F_t=F_{tt}=F_{ttt}=0$ and $F_{tttt}\neq 0$ at $t=t_0$ if and only if $x=\gamma(t_0)+\sigma^{-1}(t_0)\xi(t_0)$, $[\sigma_t-\sigma\tau_1^1](t_0)=0$ and $[\sigma_t-\sigma\tau_1^1]_t(t_0)\neq 0$. In this case,
$F$ has an $A_3$ singularity.

To complete the proof, we must prove that $F$ is a $\mathcal{R}$-versal unfolding of $f$. 
Observe that $F_x(t,x_0)=\gamma'(t)\times\xi(t)$, where $\times$ denotes vector product. For an $A_2$ point we write
$$
j^1F_x(t_0,x_0)=
\left[
\begin{array}{c}
\gamma'(t_0)\times\xi(t_0)\\
\gamma''(t_0)\times\xi(t_0)
\end{array}
\right],
$$
which has rank $2$. For an $A_3$ point we write
$$
j^2F_x(t_0,x_0)=
\left[
\begin{array}{c}
\gamma'(t_0)\times\xi(t_0)\\
\gamma''(t_0)\times\xi(t_0)\\
\sigma(t_0) \gamma'(t_0)\times\gamma''(t_0)
\end{array}
\right].
$$
Since $\sigma(t_0)\neq 0$, this matrix has rank $3$. By lemma \ref{lem:unfolding}, $F$ is a versal unfolding of a point $A_k$, $k=2,3$. 
\end{proof}

 \subsection{Realization of simple singularities of $ET_N$}
 
In this section, we give several examples of singularities that occur in $ET_N$. Through these examples, we show 
that any simple singularity can appear in $ET_N$. We recall that any simple singularity is $\mathcal{R}$-equivalent
to $A_k$, $k\geq 2$, $D_k$, $k\geq 4$, $E_6$, $E_7$ or $E_8$ (see \cite{Giblin}, ch.11). 

Consider $M\subset\R^{n+2}$ given by the graph of $f(t,y)$, $t=(t_1,...,t_n)$. Then $M$ is given by   
$$
\psi(t,y)=\left( t_1,....,t_n, y, f(t,y) \right).
$$
Thus 
\[
\psi_{t_i}=\left( e_i, 0, f_{t_i} \right);\ \ \psi_{y}=\left( 0, 1, f_{y} \right),
\]
where $e_i=(0,..,1,...0)$ with $1$ in the component $i$. We shall assume that $f=f_{t_i}=f_y=0$ at the origin, for any $1\leq i\leq n$.
Let $N$ be the submanifold $y=g(t)$ and assume that $g_{t_i}=0$ at $t=0$, i.e., the tangent plane of $N$ is generated by
$\{e_i\}$, $1\leq i\leq n$.

Let $x=(x_1,...,x_{n+2})$ and write the vector field $\xi$ as 
$$
\xi(t)=\sum_{i=1}^n a_i(t)\psi_{t_i}+\psi_y.
$$
Then $F(t,x)=\det\left( \psi_{t_1}(t),...,\psi_{t_n}(t),\xi(t),x-\psi(t) \right)$ can be written as
$$
F(t,x)=\det\left( \psi_{t_1}(t),...,\psi_{t_n}(t),\psi_y(t),x-\psi(t) \right).
$$
We conclude that 
$$
F=f-x_{n+2}+\sum_{i=1}^n f_{t_i}(x_i-t_i)+f_y(x_{n+1}-g),
$$
where $f,f_{t_i},f_y$ are taken at $(t,g(t))$.

\begin{lem}
Assume that $(f_{t_it_j}(0))$ is the identity matrix and $f_{yt_i}(0)=0$, for any $i$. Then $\psi_y$ is the Darboux direction at $0$. Moreover, the shape operator $S_1$ at the origin is given by $(f_{t_it_jy}(0))$. 
\end{lem}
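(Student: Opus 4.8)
The plan is to realize $\psi_y$ as a representative of the osculating Darboux direction at the origin, and then to read off $S_1$ from the first‑order behaviour along $N$ of the coefficients $a_i$ that make $\xi$ Darboux.

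First I would fix the frame on $N$. With $\phi(t)=\psi(t,g(t))$ one has $X_i=\phi_{t_i}=\psi_{t_i}(t,g(t))+g_{t_i}(t)\,\psi_y(t,g(t))$, so at the origin $X_i(0)=\psi_{t_i}(0)=e_i$ (using $g_{t_i}(0)=0$ and $f_{t_i}(0)=0$), and hence $T_0N=\langle e_1,\dots,e_n\rangle$. The vector field $\psi_y$ is tangent to $M$ by construction, and $\psi_y(0)=e_{n+1}$ (using $f_y(0)=0$), which is transversal to $T_0N$. The defining property of the Darboux direction is checked by a direct computation: $D_{X_i}\psi_y=\psi_{yt_i}(t,g(t))+g_{t_i}(t)\,\psi_{yy}(t,g(t))$, and at the origin the second term drops ($g_{t_i}(0)=0$) while $\psi_{yt_i}(0)=(0,\dots,0,f_{yt_i}(0))=0$ by hypothesis; thus $D_{X_i}\psi_y|_0=0\in T_0M$. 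Since $\psi_y$ is tangent to $M$ and transversal to $N$ at the origin, uniqueness of the osculating Darboux direction (Proposition~\ref{prop:ExistenceXi}) then forces $\psi_y(0)$ to span it; Proposition~\ref{prop:ExistenceXi} applies here because, taking $\eta=e_{n+2}$ (transversal to $M$ at the origin since $T_0M=\langle e_1,\dots,e_{n+1}\rangle$), one computes $h^2(X_i,X_j)(0)=\delta_{ij}$ from $f_{t_it_j}(0)=\delta_{ij}$, so the immersion is non-degenerate near $0$.

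For the shape operator I would write the Darboux field as $\xi=\sum_k a_k\psi_{t_k}+\psi_y$. From the previous step the $\eta$-component $\tau_1^2(X_i)$ of $D_{X_i}\psi_y$ vanishes at $0$, so the linear system $\sum_k a_k h^2(X_i,X_k)+\tau_1^2(X_i)=0$ determining the $a_k$ (as in the proof of Proposition~\ref{prop:ExistenceXi}) gives $a_k(0)=0$. Hence $D_{X_i}\xi|_0=\sum_k(\partial_{t_i}a_k)(0)\,X_k(0)$, which lies in $T_0N$; so also $\tau_1^1(X_i)(0)=\tau_1^2(X_i)(0)=0$, and $S_1X_i|_0=-\sum_k(\partial_{t_i}a_k)(0)\,X_k(0)$ — in particular $S_1|_0$ does not depend on the auxiliary transversal $\eta$. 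Now the tangency of $D_{X_i}\xi$ to $M$ is equivalent, after cancelling the term produced by $\sum_k(\partial_{t_i}a_k)\psi_{t_k}$, to the identity $\sum_k a_k\,\partial_{t_i}\widehat{f_{t_k}}+\partial_{t_i}\widehat{f_y}=0$ on $N$, where $\widehat{(\cdot)}$ denotes restriction to $y=g(t)$. Differentiating this identity in $t_j$, evaluating at the origin, and using $a_k(0)=0$, $\partial_{t_i}\widehat{f_{t_k}}(0)=f_{t_kt_i}(0)=\delta_{ik}$, $f_{yt_i}(0)=0$ and $g_{t_i}(0)=0$ (a routine chain-rule computation) yields $(\partial_{t_i}a_j)(0)=-f_{t_it_jy}(0)$, so the matrix of $S_1$ at the origin in the basis $\{X_1(0),\dots,X_n(0)\}$ is exactly $(f_{t_it_jy}(0))$.

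The conceptual content is modest; the main obstacle is purely organizational — one must see that every ``extra'' contribution (the terms carrying factors $a_k(0)$, $f_{t_i}(0)$, $g_{t_i}(0)$ or $f_{yt_i}(0)$) vanishes at the origin because of the normalization hypotheses, and one must be careful to differentiate the genuine Darboux field $\xi$ rather than $\psi_y$ away from $0$, even though the two coincide at $0$. The fact that $D_{X_i}\xi|_0$ turns out to be purely tangential is exactly what makes the expression for $S_1|_0$ unambiguous.
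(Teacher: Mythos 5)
Your argument is the paper's argument, organized somewhat more carefully: verify non-degeneracy and $\psi_{yt_i}(0)=0$ to identify $\psi_y$ as the Darboux direction at the origin, expand $\xi=\sum_k a_k\psi_{t_k}+\psi_y$ with $a_k(0)=0$, extract the scalar identity expressing tangency of $D_{X_i}\xi$ to $M$ along $N$, and differentiate it at the origin to read off $(\partial_{t_i}a_j)(0)$ and hence $S_1$. The first assertion and the overall strategy are fine, and your explicit checks that $h^2(X_i,X_j)(0)=\delta_{ij}$ and that $D_{X_i}\xi|_0$ is purely tangential (so that $S_1|_0$ is unambiguous) are correct and welcome.

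However, the step you dismiss as ``a routine chain-rule computation'' does not give what you claim, and the same slip occurs in the paper's own proof. Your tangency identity on $N$ reads $\sum_k a_k\,\partial_{t_j}\widehat{f_{t_k}}+\partial_{t_j}\widehat{f_y}=0$ with $\widehat{f_y}(t)=f_y(t,g(t))$, so $\partial_{t_j}\widehat{f_y}=f_{yt_j}+f_{yy}\,g_{t_j}$. Differentiating in $t_i$ and evaluating at the origin, the term $f_{yy}\,g_{t_j}$ contributes $f_{yy}(0)\,g_{t_it_j}(0)$; this does \emph{not} vanish merely because $g_{t_j}(0)=0$. The correct conclusion is $(\partial_{t_i}a_j)(0)=-f_{t_it_jy}(0)-f_{yy}(0)\,g_{t_it_j}(0)$, hence $S_1$ at the origin is the matrix $\bigl(f_{t_it_jy}(0)+f_{yy}(0)\,g_{t_it_j}(0)\bigr)$. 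A concrete check: for $n=1$, $f(t,y)=\frac{1}{2}(t^2+y^2)$ and $g(t)=\frac{1}{2}t^2$ satisfy every hypothesis of the lemma and have $f_{tty}(0)=0$, yet solving for the Darboux field gives $\xi(t)=(-t,1,-t^2/2)$, so $\xi'(0)=-\gamma'(0)$ and $S_1=1\neq 0$. The statement as written therefore needs the additional hypothesis $f_{yy}(0)\,g_{t_it_j}(0)=0$; this holds in all of the paper's realization examples (where $f$ is affine in $y$), so nothing downstream is affected, but your proof should either impose that hypothesis or carry the extra term.
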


\begin{proof}
First observe that $\psi_{yt_i}(0)=0$, for any $i$. This implies that $\psi_y$ is the Darboux direction at the origin. Moreover 
$$
\xi_{t_j}=\sum_{i=1}^n (a_i)_{t_j}\psi_{t_i}+ \sum_{i=1}^n a_i\psi_{t_it_j}+\psi_{yt_j}
$$
Since these vectors are tangent to $M$ and $\psi_{t_it_j},\psi_{yt_j}$ are co-linear and transversal to $M$, we obtain
\begin{equation}\label{eq1}
\xi_{t_j}=\sum_{i=1}^n (a_i)_{t_j}\psi_{t_i}. 
\end{equation}
This implies that
\begin{equation}\label{eq2}
\sum_{i=1}^n a_i f_{t_it_j}+f_{yt_j}=0
\end{equation}
Observe that $a_i=0$ at $0$. Differentiating equation \eqref{eq2}
and taking $t=0$ we obtain
$(a_i)_{t_j}(0,0)=-f_{t_it_jy}(0). $
Now equation \eqref{eq1} implies the second part of the lemma. 
\end{proof}

Lemma 3.4 explicitly provides the Darboux direction and calculates the shape operator $S_1$ at the origin, thus indicating the way to find the realization of simple singularities of the envelope of tangent spaces of $M$ along $N$. We shall now describe examples of functions $f(t,y)$ and $g(t)$ such that
the corresponding families $F(t,x)$ given by equation \eqref{eq:DefineFH} are versal unfoldings of functions $F(t,x_0)$, $x_0=(0,...,\sigma^{-1},0)\in ET_N$, 
with singular points of type $A_k$, $k\geq 2$, $D_k$, $k\geq 4$, $E_6$, $E_7$ and $E_8$ at $t=0$. In each of the following examples, $\sigma$ is eigenvalue of $S_1$, simple in case of $A_k$ and double in cases of $D_k$ and $E_k$. To simplify the formulas we have taken sometimes $\sigma = 1$.

\begin{exam} 
(1) Let
$$
f(t,y)=\frac{t^2}{2} +\frac{1}{6}t^3 +\frac{\sigma}{2}t^2y
$$
and $g(t)=0$. Then, close to $(0,\sigma^{-1},0)$, 
$$
F(t, x_1, x_2+\sigma^{-1},x_3)=-\frac{1}{3}t^3+\frac{\sigma}{2}t^2x_2+(\frac{1}{2}t^2+t)x_1-x_3,
$$
which is a versal unfolding of an $A_2$ point. 

\smallskip\noindent
(2) Let
$$
f(t,y)=\frac{t^2}{2} +\frac{1}{24}t^4 +\frac{\sigma}{2}t^2y
$$
and $g(t)=0$.  Close to $(0,\sigma^{-1},0)$, 
$$
F(t, x_1, x_2+\sigma^{-1},x_3)=-\frac{1}{8}t^4+\frac{\sigma}{2}t^2x_2+(\frac{1}{6}t^3+t)x_1-x_3,
$$
which is a versal unfolding of an $A_3$ point. 

\smallskip\noindent
(3) Let
$$
f(t_1,t_2,y)=\frac{1}{2}(t_1^2+t_2^2)+\frac{\sigma}{2}t_1^2y+ t_1^3t_2
$$
For $\sigma=1$, choose $g=-t_1^3-3t_1t_2$. Then, close to $(0,0,1,0)$,
$$
F=t_1^5-\frac{1}{2}t_2^2+(t_1-t_1^4)x_1+(t_1^3+t_2)x_ 2+\frac{1}{2}t_1^2x_3-x_4,
$$
which is a versal unfolding of an $A_4$ point.

\smallskip\noindent
(4) For general $k\geq 3$, let $\sigma=1$, $t=(t_1,...,t_{k-2})$, i.e., $n=k-2$, 
$$
f(t,y)=\frac{1}{2}|t|^2+\frac{1}{2}t_1^2y+\sum_{j=2}^{k-2} t_1^{j+1}t_{j},
$$
and
$$
g(t)=-t_1^{k-1}-\sum_{j=2}^{k-2} (j+1)t_1^{j-1}t_j
$$
Then, close to $(0,...,1,0)$, 
$$
F=t_1^{k+1}-\frac{1}{2}\sum_{j=2}^{k-2} t_j^2+x_1(t_1-t_1^k)+\sum_{j=2}^{k-2}x_j(t_j+t_1^{j+1})+\frac{1}{2}t_1^2x_{k-1}-x_k.
$$
which is a versal unfolding of an $A_k$ point.
\end{exam}

\begin{exam} 
(1) Let
$$
f(t_1,t_2,y)=\frac{1}{2}(t_1^2+t_2^2)+\frac{\sigma}{2}(t_1^2+t_2^2)y+ t_1^3+t_1t_2^2
$$
and $g=0$. Then 
$$
F=-2(t_1^3+t_1t_2^2)-x_4 +x_1(t_1+3t_1^2+t_2^2)+x_2(t_2+2t_1t_2)+\frac{\sigma}{2}(t_1^2+t_ 2^2)x_3
$$
which is a versal unfolding of a $D_4$ point.

\smallskip\noindent
(2) For a general $k\geq 4$, take 
$$
f=\frac{1}{2}|t|^2+\frac{y}{2}(t_1^2+t_2^2)+t_1^{k-1}+t_1t_2^2+\sum_{j=3}^{k-2}t_1^jt_j+\sum_{j=3}^{k-2}t_1^{j-2}t_2^2t_j
$$
and 
$g=-\sum_{j=3}^{k-2}jt_jt_1^{j-2}.$
Long but straightforward calculations show that, close to $(0,...,1,0)$, 
$$
F=(2-k)t_1^{k-1}-2t_1t_2^2-\frac{1}{2}\sum_{j=3}^{k-2}t_j^2-x_k+\frac{1}{2}(t_1^2+t_2^2)x_{k-1}
$$
$$
+\sum_{j=3}^{k-2}x_j(t_1^j+t_1^{j-2}t_2^2+t_j)+x_2\left(  t_2+2t_1t_2+\sum_{j=3}^{k-2}(2-j)t_1^{j-2}t_2t_j \right) 
$$
$$
+x_1\left( t_1+ (k-1)t_1^{k-2}+t_2^2+\sum_{j=3}^{k-2}(j-2)t_1^{j-3}t_2^2t_j \right),
$$
which is a versal unfolding of a $D_k$ point.
\end{exam}

\begin{exam} 
(1) Consider
$$
f=\frac{1}{2}|t|^2+\frac{1}{2}(t_1^2+t_2^2)y+t_1^3+t_2^4+t_1t_2t_3+2t_1t_2t_3y+t_1t_2^2t_4+3t_1t_2^2t_4y
$$
and $g=0$. Then 
$$
F=-2t_1^3-3t_2^4-\frac{1}{2}(t_3^2+t_4^2)-x_6+x_4(t_1t_2^2+t_4)+x_3(t_1t_2+t_3)
$$
$$
+x_1\left( t_1+3t_1^2+t_2^2t_4+t_2t_3\right)+x_2\left( t_2+4t_2^3+t_1t_3+2t_1t_2t_4 \right)
$$
$$
+x_5\left(  \frac{1}{2}(t_1^2+t_2^2)+2t_1t_2t_3+3t_1t_2^2t_4\right)
$$
which is a versal unfolding of an $E_6$ point.

\smallskip\noindent
(2) Let
$$
f=\frac{1}{2}|t|^2+\frac{1}{2}(t_1^2+t_2^2)u+t_1^3+t_1t_2^3+t_1t_2t_3+2t_1t_2t_3u+t_1^2t_2t_4+3t_1^2t_2t_4u+t_2^2t_5+2t_2^2t_5u
$$
and $g=0$. Then
$$
F=-2t_1^3-3t_1t_2^3-\frac{1}{2}(t_3^2+t_4^2+t_5^2)-x_7+x_5(t_2^2+t_5)+x_4(t_1^2t_2+t_4)+x_3(t_1t_2+t_3)
$$
$$
+x_1\left( t_1+2t_1t_2t_4+t_2^3+3t_1^2+t_2t_3  \right)+x_2\left( t_2+3t_1t_2^2+2t_2t_5+t_1t_3+t_1^2t_4 \right)
$$
$$
+x_6\left( \frac{1}{2}(t_1^2+t_2^2)+3t_1^2t_2t_4+2t_1t_2t_3+2t_2^2t_5 \right),
$$
which is a versal unfolding of an $E_7$ point.

\smallskip\noindent
(3) Let 
$$
f=\frac{1}{2}|t|^2+\frac{1}{2}(t_1^2+t_2^2)u+t_1^3+t_2^5+t_1t_2t_3+2t_1t_2t_3u
$$
$$
+t_1t_2^2t_4+3t_1t_2^2t_4u+t_2^3t_5+3t_2^3t_5u+t_1t_2^3t_6+4t_1t_2^3t_6u
$$
and $g=0$. Then 
$$
F=-2t_1^3-4t_2^5-\frac{1}{2}(t_3^2+t_4^2+t_5^2+t_6^2)+x_4(t_1t_2^2+t_4)+x_3(t_1t_2+t_3)+x_5(t_2^3+t_5)+x_6(t_1t_2^3+t_6)
$$
$$
+x_1\left( t_1+3t_1^2+t_2^2t_4+t_2t_3+t_2^3t_6\right)+x_2\left( t_2+5t_2^4+t_1t_3+2t_1t_2t_4+3t_2^2t_5+3t_1t_2^2t_6 \right)
$$
$$
+x_7\left(  \frac{1}{2}(t_1^2+t_2^2)+2t_1t_2t_3+3t_1t_2^2t_4+4t_1t_2^3t_6+3t_2^3t_5\right)-x_8,
$$
which is a versal unfolding of an $E_8$ point.
\end{exam}

\section{Affine metrics and normal plane bundles}\label{sec:VectorFields}

\subsection{Affine metric of a vector field}

Fix a vector field $\xi$ in the osculating Darboux direction. For a local frame $\{X_1,...,X_n\}$ of $TN$ and $X,Y\in TN$, define
$$
G_{\xi}(X,Y)=[X_1,...,X_n, D_XY, \xi].
$$
It is proved in \cite{LSan} that
\begin{equation}\label{eq:Metric}
g_{\xi}(X,Y)=\frac{G_{\xi}(X,Y)}{{\det G_{\xi}(X_i,X_j)}^{\frac{1}{n+2}} }
\end{equation}
is a metric in $N$ (see also \cite{LSanTese}, ch.6). Assuming $[X_1,...,X_n,\eta,\xi]=1$, we get that
$G_{\xi}(X_i,X_j)=h^2(X_i,X_j)$. Thus the non-degeneracy hypothesis of the matrix $(h^2(X_i,X_j))$ implies 
that the metric $g_{\xi}$ is also non-degenerate. 

\subsection{Affine normal plane bundle}\label{sec:AffineNormalBundle}

Assume for the moment that we have chosen a transversal bundle $\sigma_1$ generated by $\{\xi,\bar\eta_1\}$. Take a $g_{\xi}$-orthonormal frame $\{X_1,...,X_n\}$ of the tangent space of $N$ and change the basis of $\sigma_1$ by writing $\eta_1=\mu\bar\eta_1+\lambda\xi$. By choosing an adequate $\mu$, we may assume that $[X_1,...,X_n,\eta_1,\xi]=1$. Note that
$\lambda$ remains arbitrary and $h^2(X_i,X_j)=\delta_{ij}$.

Now we shall make a particular choice for the transversal bundle. Write
\begin{equation*}
\eta=\eta_1-\sum_{k=1}^n\tau_2^2(X_k)X_k.
\end{equation*}
Direct computations show that $D_{X_i}\eta$ is tangent to $M$, for $1\leq i\leq n$, and so $D_X\eta$ is tangent to $M$, for any $X$ tangent to $N$. The transversal bundle $\sigma$ generated by $\{\xi,\eta\}$ is called the {\it affine normal plane bundle}.

It is proved in (\cite{LSan}, props.3.5 and 3.6) that the affine normal plane bundle $\sigma$ is independent of the choice of the initial bundle $\sigma_1$ and the $g_{\xi}$-orthonormal basis
$\{X_1,...,X_n\}$ of the tangent space of $N$. Thus $\sigma$ depends only on the choice of the vector field $\xi$. We shall denote it by $\sigma=\sigma(\xi)$. The results of this section
are summarized in next proposition:

\begin{prop}
Given a codimension $1$ submanifold $N\subset M$ of a hypersurface $M\subset\R^{n+2}$ and a vector field $\xi$ in the osculating Darboux direction, equation \eqref{eq:Metric} defines
a metric $g_{\xi}$ in $N$. There exists a vector field $\eta$ transversal to $M$ such that
\begin{equation}\label{eq:AfNormal1}
[X_1,...X_n,\eta,\xi]=1,
\end{equation}
\begin{equation}\label{eq:AfNormal2}
h^2(X_i,X_j)=\delta_{ij},
\end{equation}
for any $g_{\xi}$-orthonormal frame $\{X_1,..,X_n\}$ of $N$, and
\begin{equation}\label{eq:AfNormal3}
\tau_1^2=\tau_2^2=0.
\end{equation}
\end{prop}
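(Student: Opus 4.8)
The plan is to follow, and make fully explicit, the two–stage construction already sketched in Section~\ref{sec:AffineNormalBundle}. The first assertion --- that equation \eqref{eq:Metric} defines a (non-degenerate) metric $g_\xi$ on $N$ --- is exactly the statement quoted from \cite{LSan}, so nothing new is required there; the non-degeneracy of $g_\xi$ is the remark made right after \eqref{eq:Metric}, using the non-degeneracy hypothesis on $(h^2(X_i,X_j))$. So the work is entirely in producing $\eta$ and checking \eqref{eq:AfNormal1}--\eqref{eq:AfNormal3}.

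First I would fix an arbitrary transversal bundle $\sigma_1=\mathrm{span}\{\xi,\bar\eta_1\}$ and a $g_\xi$-orthonormal frame $\{X_1,\dots,X_n\}$ of $TN$, and rescale: put $\eta_1=\mu\bar\eta_1$ with $\mu=[X_1,\dots,X_n,\bar\eta_1,\xi]^{-1}$, so that \eqref{eq:AfNormal1} holds for this frame. Under this normalization $G_\xi(X_i,X_j)=[X_1,\dots,X_n,D_{X_i}X_j,\xi]=h^2(X_i,X_j)$ by \eqref{eq:Decomposition}, so \eqref{eq:Metric} together with $g_\xi(X_i,X_j)=\delta_{ij}$ forces $h^2(X_i,X_j)=c\,\delta_{ij}$ with $c=|\det(h^2(X_k,X_l))|^{1/(n+2)}$; taking determinants gives $c^{n+2}=c^{n}$, hence $c=1$ and \eqref{eq:AfNormal2} follows. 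This is the one place where the precise exponent $\tfrac1{n+2}$ in the definition of $g_\xi$ is genuinely used; everything else is formal.

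Next I would pass to the affine normal plane bundle by the correction $\eta=\eta_1-\sum_{k=1}^n\tau_2^2(X_k)X_k$ written in Section~\ref{sec:AffineNormalBundle}. Since the subtracted vector is tangent to $N$, the determinant in \eqref{eq:AfNormal1} is unchanged (the new terms have a repeated column), and the $\eta$-component of $D_{X_i}X_j$ coincides with its $\eta_1$-component, so \eqref{eq:AfNormal1} and \eqref{eq:AfNormal2} persist. For \eqref{eq:AfNormal3}: $\tau_1^2=0$ is automatic, because $\xi$ lies in the osculating Darboux direction, so by \eqref{eq:DXXi} the vector $D_X\xi=-S_1X+\tau_1^1(X)\xi$ has no component along the $M$-transversal vector $\eta$; and expanding $D_{X_i}\eta$ by means of \eqref{eq:Decomposition} and \eqref{eq:DXXi} shows that its $\eta$-component equals $\tau_2^2(X_i)-\sum_k\tau_2^2(X_k)h^2(X_i,X_k)=\tau_2^2(X_i)-\tau_2^2(X_i)=0$ by \eqref{eq:AfNormal2}, i.e. $\tau_2^2=0$ for the corrected $\eta$.

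Finally, to see that this single $\eta$ satisfies \eqref{eq:AfNormal1} and \eqref{eq:AfNormal2} for \emph{every} $g_\xi$-orthonormal frame, I would note that two such frames are related by a pointwise orthogonal matrix $O$; then $h^2(Y_i,Y_j)=\sum_kO_{ik}O_{jk}=\delta_{ij}$ holds automatically, and $[Y_1,\dots,Y_n,\eta,\xi]=\det(O)=\pm1$, the sign being pinned down once orientations are fixed (alternatively one invokes \cite{LSan}, Props.~3.5 and 3.6, which in addition give independence of $\sigma(\xi)$ from the initial choice of $\sigma_1$), while $\tau_2^2$ is frame-independent by its very definition. I expect no conceptual obstacle; the only real chore is bookkeeping how the successive replacements $\bar\eta_1\rightsquigarrow\eta_1\rightsquigarrow\eta$ affect each of the quantities $[X_1,\dots,X_n,\cdot\,,\xi]$, $h^2$, $\tau_1^2$ and $\tau_2^2$, and once that small table is assembled the proposition is immediate.
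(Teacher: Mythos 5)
Your proposal is correct and follows essentially the same route as the paper, which presents this proposition as a summary of the two-stage construction in Section~4.2 (normalize $\bar\eta_1$ to get $[X_1,\dots,X_n,\eta_1,\xi]=1$ and $h^2(X_i,X_j)=\delta_{ij}$, then correct by $-\sum_k\tau_2^2(X_k)X_k$ to kill $\tau_2^2$, with frame-independence delegated to \cite{LSan}). Your write-up merely makes explicit the determinant computation forcing $c=1$ and the cancellation $\tau_2^2(X_i)-\sum_k\tau_2^2(X_k)h^2(X_i,X_k)=0$, both of which are consistent with the paper's sketch.
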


\bigskip\noindent
The transversal vector field $\eta$ satisfying equations \eqref{eq:AfNormal1}, \eqref{eq:AfNormal2} and \eqref{eq:AfNormal3} is not unique. In fact, any
vector field $\bar\eta$ of the form
\begin{equation}\label{eq:BarEta}
\bar\eta=\eta+\lambda\xi,
\end{equation}
for some scalar function $\lambda$, also satisfies these equations. Conversely, any vector field $\bar\eta$ satisfying equations \eqref{eq:AfNormal1}, \eqref{eq:AfNormal2} and \eqref{eq:AfNormal3} is given by equation \eqref{eq:BarEta}, for some scalar function $\lambda$.

\subsection{Blaschke metric and affine normal}

In this section we assume that $M\subset\R^{n+2}$ is non-degenerate (see remark \ref{rem:BlaschkeHyp}). Denote by
$\zeta$ the affine Blaschke vector field of $M\subset\R^{n+2}$ and by $h$ the Blaschke metric of $M$ (\cite{Nomizu}).
We shall investigate the conditions under which $\zeta$ belongs to the affine normal plane (see \cite{LSanTese}, th.5.15).

\begin{lem}\label{lem:Equivalences}
Assume that $\xi$ is a vector field in the osculating Darboux direction and let $\zeta$ be the affine Blaschke vector field of $M\subset\R^{n+2}$. Let $\{X_1,...,X_n\}$ be an $h$-orthonormal
local frame of $TN$. The following conditions are equivalent:
\begin{enumerate}
\item $h(\xi,\xi)=1$.
\item $\{X_1,...,X_n,\xi\}$ $h$-orthonormal local frame of $TM$.
\item $[X_1,....,X_n,\zeta,\xi]=1$.
\item $\{X_1,...,X_n\}$ $g_{\xi}$-orthonormal local frame of $TN$.
\item The metric $g_{\xi}$ is the restriction of the Blaschke metric of $M$ to $N$.
\item $\zeta$ is contained in the affine normal plane bundle.
\end{enumerate}
\end{lem}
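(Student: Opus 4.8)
The plan is to prove the six conditions equivalent by establishing a cycle of implications, using throughout the defining equation of the Blaschke structure of $M$: since $\{X_1,\dots,X_n,\xi\}$ spans $TM$ and $\zeta$ is the Blaschke vector field, we have $D_{X_i}X_j = \tilde\nabla_{X_i}X_j + h(X_i,X_j)\zeta$ and (by Remark \ref{rem:BlaschkeHyp}) $h(X,\xi)=0$ for all $X\in TN$, so $D_X\xi$ is already tangent to $M$ — consistent with $\xi$ being in the osculating Darboux direction. The Blaschke normalization also gives the apolarity and volume conditions $[X_1,\dots,X_n,\xi,\zeta]=1$ whenever $\{X_1,\dots,X_n,\xi\}$ is the frame used to compute $h$, or more precisely $\det(h(Y_i,Y_j))=[Y_1,\dots,Y_{n+1}]^2$ for any frame $\{Y_i\}$ of $TM$, which is the key scaling identity.

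First I would show $(1)\Leftrightarrow(2)$: given the $h$-orthonormal frame $\{X_1,\dots,X_n\}$ of $TN$, condition (2) just adds that $h(\xi,\xi)=1$ and $h(\xi,X_i)=0$; but the latter is automatic from Remark \ref{rem:BlaschkeHyp}, so (2) is equivalent to (1). Next, $(2)\Leftrightarrow(3)$ follows from the Blaschke volume normalization: for an $h$-orthonormal frame of $TM$ the determinant identity $\det(h(Y_i,Y_j)) = [Y_1,\dots,Y_{n+1}]^2$ with $Y_{n+1}=\zeta$ — wait, more carefully, the Blaschke condition is $[X_1,\dots,X_n,\xi,\zeta]^2 = \det(h(Z_i,Z_j))$ where $Z_i$ runs over $\{X_1,\dots,X_n,\xi\}$; if this frame is $h$-orthonormal the right side is $1$, giving (3), and conversely (3) plus $h$-orthonormality of $\{X_1,\dots,X_n\}$ forces $h(\xi,\xi)=1$. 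For $(3)\Leftrightarrow(4)$ I would unwind the definition of $g_\xi$: by equation \eqref{eq:Metric}, $\{X_1,\dots,X_n\}$ is $g_\xi$-orthonormal iff $G_\xi(X_i,X_j)=\delta_{ij}\cdot c$ with the normalization constant forcing $c=1$, i.e. iff $[X_1,\dots,X_n,D_{X_i}X_j,\xi]=\delta_{ij}$ up to the determinant factor; substituting $D_{X_i}X_j = \tilde\nabla_{X_i}X_j + h(X_i,X_j)\zeta$ and using that $\tilde\nabla_{X_i}X_j$ lies in $TM = \mathrm{span}\{X_1,\dots,X_n,\xi\}$, the bracket collapses to $h(X_i,X_j)[X_1,\dots,X_n,\zeta,\xi]$, so $G_\xi(X_i,X_j) = \delta_{ij}[X_1,\dots,X_n,\zeta,\xi]$, and the metric normalization makes this work out exactly when the bracket is $1$. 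Then $(4)\Leftrightarrow(5)$ is essentially the same computation read invariantly: for arbitrary $X,Y\in TN$, $G_\xi(X,Y) = h(X,Y)[X_1,\dots,X_n,\zeta,\xi]$, so $g_\xi = h|_N$ precisely when that bracket is $1$, which by the previous step is condition (4)/(3).

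Finally, for $(6)$ I would use the construction of the affine normal plane bundle in \S\ref{sec:AffineNormalBundle}. Assuming (1)–(5), one has a $g_\xi$-orthonormal frame $\{X_1,\dots,X_n\}$ which is simultaneously $h$-orthonormal, and $[X_1,\dots,X_n,\zeta,\xi]=1$; decomposing $D_X\zeta$ I would check that $\zeta$ satisfies the three defining properties of the affine normal plane bundle — namely $[X_1,\dots,X_n,\zeta,\xi]=1$, $h^2(X_i,X_j)=\delta_{ij}$ (this is exactly $h|_N = g_\xi$ being the identity in this frame, since the $\zeta$-component of $D_{X_i}X_j$ is $h(X_i,X_j)$), and $\tau_1^2 = \tau_2^2 = 0$ (the first because $D_X\xi\in TM$ by Remark \ref{rem:BlaschkeHyp}, the second because $D_X\zeta\in TM$ by the Blaschke/Gauss equation). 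By the uniqueness statement following the Proposition, $\zeta$ then differs from the canonical $\eta$ only by a multiple of $\xi$, hence lies in $\sigma(\xi)$, giving (6). Conversely, if $\zeta \in \sigma(\xi)$, write $\zeta = c\,\eta + d\,\xi$ with $\eta$ the canonical affine normal; comparing $\zeta$-components and using that both $\zeta$ and $\eta$ give $h^2 = g_\xi$-identity forces $c=1$, and then the Blaschke volume normalization $[X_1,\dots,X_n,\zeta,\xi]=1$ for a $g_\xi$-orthonormal frame yields (3). The main obstacle I expect is bookkeeping the normalization constant in \eqref{eq:Metric}: one must carefully track the factor $\det(G_\xi(X_i,X_j))^{1/(n+2)}$ and confirm it equals $1$ exactly under these hypotheses rather than some other power of the bracket — this requires using that $[X_1,\dots,X_n,\zeta,\xi]$ is a single scalar pulled out of an $n\times n$ diagonal computation, and that consistency of \eqref{eq:Metric} as a genuine metric pins the exponent down. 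Everything else is linear algebra with the two Gauss-type decompositions.
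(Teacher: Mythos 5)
Your argument is correct and follows essentially the same route as the paper's own (very terse) proof: the chain $(1)\Leftrightarrow(2)\Leftrightarrow(3)$ via the Blaschke volume normalization, $(3)\Leftrightarrow(4)\Leftrightarrow(5)$ via the identity $G_\xi(X,Y)=h(X,Y)[X_1,\dots,X_n,\zeta,\xi]$ coming from the Gauss equation of $M$, and $(6)\Leftrightarrow(3)$ via the characterization of the affine normal plane bundle by equations \eqref{eq:AfNormal1}--\eqref{eq:AfNormal3} are exactly the verifications the paper declares ``easy.'' The one soft spot --- in the converse of $(6)$ you assert that $\zeta$ gives the ``$h^2=g_\xi$-identity,'' which presupposes $h|_N=g_\xi$ --- is shared by the paper's own justification that \eqref{eq:AfNormal2} ``always holds,'' so it is not a deviation from its proof.
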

\begin{proof}
It is easy to verify the equivalence between items 1,2 and 3. It is also easy to verify the equivalence of 3 and 4, while the equivalence between 4 and 5 is obvious.
Finally, since equations \eqref{eq:AfNormal2} and \eqref{eq:AfNormal3} always hold, item 6 is equivalent to equation \eqref{eq:AfNormal1}, thus to item 3.
\end{proof}

\section{Parallel vector fields}

Consider a vector field $\xi$ in the osculating Darboux direction of $N\subset M$. We say that $\xi$ is {\it parallel} if
$D_X\xi$ is tangent to $N$, for any $X$ tangent to $N$.

\subsection{Darboux frames for curves}\label{sec:Curves}

Consider a smooth curve $\gamma\subset M$, where $M\subset\R^3$ is a surface. We shall assume that the osculating plane of $\gamma$ does not coincide with the tangent plane of $M$.

Let $\gamma(t)$, $t\in I$, be a curve contained in a surface $M$. We say that the parameterization is {\it adapted} to $M$ if $\gamma'''(t)\in T_{\gamma(t)}M$, for each $t\in I$.
Observe that when $\gamma$ is contained in a hyperplane, the affine parameterization $\gamma(s)$ satisfies $\gamma'''(s)=-\mu(s)\gamma'(s)$, where $\mu(s)$ is the affine curvature of $\gamma$, and so this parameterization is adapted to $M$.

\begin{lem}
Assume that $\gamma''(s)\not\in T_{\gamma(s)}M$, for each $s\in I$, i.e., the osculating plane of $\gamma$ does not coincide with the tangent plane of $M$.
Then $\gamma$ admits an unique, up to linear changes, adapted re-parameterization.
\end{lem}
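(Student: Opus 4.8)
The plan is to set up an ODE for the reparameterization and count the freedom in its solution. Write the sought reparameterization as $s \mapsto t = t(s)$ with inverse $\gamma(t) = \gamma(s(t))$, and compute the third derivative of $\gamma \circ s$ by the chain rule. Denoting $' = d/ds$ and using $\dot{} = d/dt$, one gets
\begin{equation*}
\frac{d^3}{ds^3}\bigl(\gamma(t(s))\bigr) = (t')^3\dddot\gamma + 3 t' t'' \ddot\gamma + t''' \dot\gamma.
\end{equation*}
The requirement is that this vector lie in $T_{\gamma}M$. Since $\dot\gamma \in T_\gamma M$ always and, by the hypothesis $\ddot\gamma \notin T_\gamma M$, the vectors $\ddot\gamma$ and (the $T_\gamma M$-transversal part of) $\dddot\gamma$ span the normal line to $M$ in a controlled way, the condition reduces to a single scalar linear equation relating $t'''$—no wait: it reduces to the vanishing of the component transversal to $M$. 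Write $\dddot\gamma = (\text{tangent to }M) + \rho\,\nu$ and $\ddot\gamma = (\text{tangent to }M) + \nu$ where $\nu$ spans the $1$-dimensional normal space; then the transversal component of the displayed expression is $\bigl((t')^3\rho + 3 t' t''\bigr)\nu$. Since $t' \ne 0$ for a reparameterization, this vanishes iff
\begin{equation*}
3 t'' + (t')^2 \rho(t) = 0,
\end{equation*}
a second-order ODE for $t(s)$ (first-order in $t'$, separable once one knows $\rho$ along the curve).

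Next I would solve this ODE and read off the freedom. Setting $w = t'$, the equation becomes $3 w' + w^2 \rho = 0$, equivalently $3 (1/w)' = \rho$, so $1/w$ is determined up to an additive constant by integrating $\rho$ once; hence $w = t'$ is determined up to the transformation $1/w \mapsto 1/w + c$. Then $t$ itself is obtained by one further integration, introducing a second additive constant. I must check that these two constants correspond exactly to the ``up to linear changes'' claim: a linear change $t \mapsto a t + b$ has derivative $a$, constant, so it changes $1/w$ by the factor $1/a$ and adds a constant to $t$. Matching this against the two integration constants shows that the adapted parameterization is unique up to $t \mapsto a t + b$ with $a \ne 0$, which is precisely ``unique up to linear changes.'' One should also confirm the solution is globally defined on $I$ (or at least that one can always produce one honest adapted parameterization on all of $I$): since the ODE for $1/w$ is linear, $1/w$ never blows up, but it could vanish; choosing the integration constant generically (or noting that the statement is local/up-to-restriction as is standard here) avoids that, and in fact the starting parameterization already has $\gamma'''$ with a definite transversal component, so $\rho$ is a smooth function and $1/w = \tfrac13\int\rho + c$ can be kept nonzero by choosing $|c|$ large, giving a bona fide adapted reparameterization.

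The main obstacle I anticipate is the bookkeeping of the normal component: one needs to make precise that ``$\gamma''$ not tangent to $M$'' lets us write $\dddot\gamma$ and $\ddot\gamma$ with a common transversal direction $\nu$ and extract the scalar $\rho$ smoothly, and then verify that no other component of $\frac{d^3}{ds^3}\gamma$ obstructs adaptedness—i.e. that once the $\nu$-component is killed, the rest is automatically tangent to $M$. This is immediate from the decomposition above, but it is the step that uses the non-degeneracy hypothesis essentially. After that, the identification of the two integration constants with the two-parameter group of affine reparameterizations is routine. I would present the proof in the order: (i) chain-rule expansion of the third derivative; (ii) isolate the transversal component using the hypothesis, obtaining the ODE $3t'' + (t')^2\rho = 0$; (iii) integrate twice, exhibiting the solution and its two constants; (iv) match the constants with linear changes $t \mapsto at+b$ to conclude existence and the stated uniqueness.
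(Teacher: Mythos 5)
Your derivation of the governing ODE is exactly the paper's: the same chain-rule expansion of the third derivative, the same extraction of the component transversal to $M$ (the paper pairs with a co-normal $1$-form $\nu$ and gets $A s_t^2+3Bs_{tt}=0$ with $A=\nu(\gamma_{sss})$, $B=\nu(\gamma_{ss})\neq 0$, which is your $3t''+(t')^2\rho=0$ with $\rho=A/B$), and the same use of the hypothesis to guarantee the equation is solvable for the second-order term. The paper stops there with ``the lemma follows''; you go on to integrate, and that is where the argument breaks.

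The problem is the step ``$3(1/w)'=\rho$, so $1/w$ is determined up to an additive constant by integrating $\rho$ once.'' The function $\rho$ lives on the curve, i.e.\ it is known as a function of the \emph{old} parameter, so on the right-hand side it appears as $\rho(t(s))$ with $t(s)$ the unknown; the equation is not a quadrature in $s$. Moreover the conclusion is structurally wrong: an affine change of the adapted parameter multiplies $w=t'$ by a constant, it does not translate $1/w$, so if the solution set really were $\bigl\{1/w=\tfrac13\int\rho+c\bigr\}$ the adapted parameterizations would not all differ by linear changes and the uniqueness claim would fail --- your own ``matching'' paragraph is where this mismatch surfaces (``changes $1/w$ by the factor $1/a$'' versus an additive $c$), and it is never reconciled. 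The fix is routine: regard $w$ as a function of the old parameter along the monotone solution, so $w'=w_t\,w$ and the ODE becomes the linear homogeneous equation $3w_t+\rho\,w=0$, whence $w=C\exp\bigl(-\tfrac13\int\rho\,dt\bigr)$ with a \emph{multiplicative} constant $C\neq 0$, followed by one integration contributing an additive constant. These two constants correspond exactly to the affine changes $s\mapsto as+b$, and since the exponential never vanishes, your closing worry about zeros of $1/w$ (and the ``choose $|c|$ large'' patch, which in any case only works on compact subintervals) disappears.
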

\begin{proof}
Let $t$ be a new parameter with the change of variables given by $s=s(t)$. Then
\begin{align*}
\gamma_t& =\gamma_s s_t,\\
\gamma_{tt}&=\gamma_{ss} s_t^2+ \gamma_s s_{tt},\\
\gamma_{ttt}&=\gamma_{sss}s_t^3+ 3 \gamma_{ss}s_t s_{tt}+ \gamma_s s_{ttt}.
\end{align*}
Let $\nu$ be a co-normal vector field of $M$, i.e., $\nu(Z)=0$, for any $Z$ tangent to $M$.
Then $\gamma_{ttt}\in T_{\gamma(t)} M$ if and only if $\nu(\gamma_{ttt})=0$, which is equivalent to solve the following differential equation
\[
\nu(\gamma_{sss})s_t^3+3\nu(\gamma_{ss})s_t s_{tt}=0.
\]
Since $s_t>0$, this equation is equivalent to 
\[
A s_t^2+3 B s_{tt}=0,
\]
where $A=\nu(\gamma_{sss})$ and $B=\nu(\gamma_{ss})$. Since $B\neq 0$ by hypothesis, the lemma follows.
\end{proof}

Assume now that $\gamma(t)$ is an adapted parameterization of $\gamma$ and let $\xi$ be a multiple of $\xi_0$ satisfying $[\gamma'(t),\gamma''(t),\xi(t)]=1$.
Differentiating this equation we obtain that $\xi'(t)$ is tangent to $\gamma$, and so $\xi$ is parallel. We shall call the frame $\{\gamma'(t),\gamma''(t),\xi(t)\}$
the {\it affine Darboux frame} of $\gamma\subset M$. The structural equations of this frame are given by
\begin{equation*}
\left\{
\begin{array}{c}
(\gamma'(t))'=\gamma''(t),\\
(\gamma''(t))'=-\mu(t)\gamma'(t)+\tau(t)\xi(t),\\
\xi'(t)=-\sigma(t)\gamma'(t).
\end{array}
\right.
\end{equation*}

\subsection{Equiaffine transversal bundles}

Fix a vector field $\xi$ in the osculating Darboux direction and let $\sigma=\sigma(\xi)$ be the affine normal transversal bundle described in section \ref{sec:VectorFields}.
Then the corresponding connection $\nabla$ given by equation \eqref{eq:Decomposition} depends also on the choice of $\xi$. Thus we write $\nabla=\nabla(\xi)$.

Consider the volume form $\omega_g$ induced by the metric $g=g_{\xi}$. We say that the pair $(\nabla,g)$ is {\it equiaffine} if $\nabla(\omega_g)=0$. Let
$\{X_1,...,X_n\}$ be a $g$-orthonormal local frame of $TN$. 
Since
$$
\nabla_{X_i}\omega_g=-\sum_{j=1}^n \omega_g(X_1,..,\nabla_{X_i}X_j,...,X_n)=-\sum_{j=1}^n \Gamma_{ij}^j,
$$
where $\Gamma_{ij}^k$ are the Christoffel symbols of the connection, we conclude that $(\nabla,g)$ is equiaffine if and only if
\begin{equation}\label{eq:Equiaffine}
\Gamma_{i1}^1+\Gamma_{i2}^2+...+\Gamma_{in}^n=0,\ \ 1\leq i\leq n.
\end{equation}

\begin{prop}
The pair $(\nabla(\xi),g_{\xi})$ is equiaffine if and only if $\xi$ is parallel.
\end{prop}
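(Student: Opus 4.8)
The plan is to express the condition of parallelism of $\xi$ and the equiaffinity of $(\nabla,g)$ in terms of the same structural data, namely the $1$-form $\tau_1^1$ and the Christoffel symbols $\Gamma_{ij}^k$ of $\nabla=\nabla(\xi)$, and then show that both amount to the vanishing of $\tau_1^1$. First I would observe that, since $\xi$ is in the osculating Darboux direction, $D_X\xi$ is automatically tangent to $M$, so by \eqref{eq:DXXi} the $\eta$-component $\tau_1^2$ vanishes; moreover, because we work with the affine normal plane bundle $\sigma(\xi)$, the $\eta$-component of $D_X\xi$ has been normalized away. Hence $D_X\xi=-S_1X+\tau_1^1(X)\xi$, and parallelism of $\xi$ — i.e. $D_X\xi$ tangent to $N$ — is equivalent to $\tau_1^1\equiv 0$.

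Next I would relate $\tau_1^1$ to the Christoffel symbols by differentiating the normalization $[X_1,\dots,X_n,\eta,\xi]=1$ for a $g$-orthonormal frame $\{X_1,\dots,X_n\}$. Differentiating in the direction $X_i$ and expanding each $D_{X_i}X_j$ via \eqref{eq:Decomposition}, $D_{X_i}\eta$ via the second line of \eqref{eq:DXXi}, and $D_{X_i}\xi$ via the first line, one picks up $\sum_j\Gamma_{ij}^j$ from the $TN$-terms, $\tau_2^2(X_i)$ from the $\eta$-derivative, and $\tau_1^1(X_i)$ from the $\xi$-derivative; the determinant being constant forces
\begin{equation*}
\sum_{j=1}^n\Gamma_{ij}^j+\tau_2^2(X_i)+\tau_1^1(X_i)=0,\qquad 1\le i\le n.
\end{equation*}
But the affine normal plane bundle was constructed precisely so that $\tau_2^2=0$ (equation \eqref{eq:AfNormal3}), so this reduces to $\sum_j\Gamma_{ij}^j=-\tau_1^1(X_i)$. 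Comparing with the criterion \eqref{eq:Equiaffine} for $(\nabla,g)$ to be equiaffine, we see that $(\nabla,g)$ is equiaffine if and only if $\tau_1^1(X_i)=0$ for all $i$, i.e. $\tau_1^1\equiv 0$, which by the first paragraph is exactly parallelism of $\xi$.

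The main obstacle I anticipate is bookkeeping the derivative of the volume-form normalization cleanly: one must make sure the frame used is $g$-orthonormal (not merely arbitrary) so that $\nabla_{X_i}\omega_g=-\sum_j\Gamma_{ij}^j$ holds as stated in the excerpt, and one must correctly account for the fact that differentiating $[X_1,\dots,X_n,\eta,\xi]$ also involves replacing one slot at a time and that the $h^1,h^2$ terms (which put $\xi$ or $\eta$ into a slot already occupied by $\xi$ or by the genuine transversal) drop out by repetition of columns. Once that computation is done carefully — using $\tau_1^2=0$ (osculating Darboux direction) and $\tau_2^2=0$ (affine normal plane bundle) — the equivalence is immediate, so the real content is this single identity relating $\sum_j\Gamma_{ij}^j$ to $\tau_1^1$.
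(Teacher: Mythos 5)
Your proposal is correct and follows essentially the same route as the paper: differentiate the normalization $[X_1,\dots,X_n,\eta,\xi]=1$ in a $g$-orthonormal frame to obtain $\sum_k\Gamma_{ik}^k+\tau_1^1(X_i)=0$ (the paper suppresses the $\tau_2^2$ term you make explicit, since it vanishes by \eqref{eq:AfNormal3}), and then identify equiaffinity with $\sum_k\Gamma_{ik}^k=0$ and parallelism with $\tau_1^1=0$. Your version is just slightly more explicit about the bookkeeping; there is no substantive difference.
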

\begin{proof}
Differentiating $[X_1,...,X_n,\eta,\xi]=1$ in the direction $X_i$ we obtain
$$
\sum_{k=1}^n\Gamma_{ik}^k+\tau_1^1(X_i)=0.
$$
Since $\xi$ is parallel if and only if $\tau_1^1(X_i)=0$, for $1\leq i\leq n$, the proposition is proved.
\end{proof}

\subsection{The apolarity condition}

The cubic forms are defined as
\begin{equation}
\begin{array}{c}
C^1(X,Y,Z)=(\nabla_Xh^1)(Y,Z)+\tau_1^1(X)h^1(Y,Z)+\tau_2^1(X)h^2(Y,Z),\\
C^2(X,Y,Z)=(\nabla_Xh^2)(Y,Z)+\tau_1^2(X)h^1(Y,Z)+\tau_2^2(X)h^2(Y,Z).
\end{array}
\end{equation}
One can verify that the cubic forms are symmetric in $X,Y,Z$.

Take any $\eta$ in the affine normal plane described in section \ref{sec:AffineNormalBundle}. In this case,
$h^2$ coincides with $g_{\xi}$ and the cubic form $C^2$ can be written as
\begin{equation}
C^2(X,Y,Z)=(\nabla_Xh^2)(Y,Z).
\end{equation}
The cubic form $C^2$ is {\it apolar} with respect to $h^2$ if
\begin{equation}
tr_{h^2}C^2(X,\cdot,\cdot)=0,
\end{equation}
for any vector field $X\in TN$.

Consider a $g_{\xi}$-orthonormal basis $\{X_1,...,X_n\}$. Then the apolarity condition for $(C^2,h^2)$
is equivalent to equation \eqref{eq:Equiaffine}. Thus we conclude the following proposition:

\begin{prop}\label{prop:EquivApolar}
The following statements are equivalent:
\begin{enumerate}
\item The cubic form $C^2$ is apolar with respect to $h^2$.
\item The pair $(\nabla,g)$ is equiaffine.
\item The vector field $\xi$ is parallel.
\end{enumerate}
\end{prop}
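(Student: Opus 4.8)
The plan is to establish the chain of equivalences by reducing everything to a single local computation involving the $1$-forms $\tau_1^1$ and the Christoffel symbols of $\nabla$ in a $g_\xi$-orthonormal frame. The key observation, already recorded in the preceding proposition, is that the volume-normalization identity obtained by differentiating $[X_1,\dots,X_n,\eta,\xi]=1$ in the direction $X_i$ yields
\begin{equation*}
\sum_{k=1}^n\Gamma_{ik}^k+\tau_1^1(X_i)=0,\qquad 1\leq i\leq n,
\end{equation*}
valid for any $g_\xi$-orthonormal frame $\{X_1,\dots,X_n\}$ and any $\eta$ in the affine normal plane bundle (here one uses $\tau_2^2=0$ from equation \eqref{eq:AfNormal3}, which kills the contribution of the derivative of $\eta$ along $\xi$). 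This single identity is the hub: its left-hand side is exactly the quantity appearing in the equiaffinity criterion \eqref{eq:Equiaffine}, and $\xi$ is parallel precisely when $\tau_1^1\equiv 0$ by definition.

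First I would prove $(2)\Leftrightarrow(3)$: combining \eqref{eq:Equiaffine} with the displayed identity shows that $\sum_k\Gamma_{ik}^k=0$ for all $i$ if and only if $\tau_1^1(X_i)=0$ for all $i$, i.e. $\xi$ is parallel. This is essentially the proposition stated just before the present one, so it can be quoted. Next I would prove $(1)\Leftrightarrow(2)$. Since $\eta$ is taken in the affine normal plane bundle, Proposition on the affine normal plane bundle gives $h^2(X_i,X_j)=\delta_{ij}$ on a $g_\xi$-orthonormal frame and $\tau_1^2=\tau_2^2=0$, so the cubic form simplifies to $C^2(X,Y,Z)=(\nabla_X h^2)(Y,Z)$. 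Expanding $(\nabla_{X_i}h^2)(X_j,X_k)=X_i(h^2(X_j,X_k))-h^2(\nabla_{X_i}X_j,X_k)-h^2(X_j,\nabla_{X_i}X_k)$ and using that $h^2(X_j,X_k)=\delta_{jk}$ is constant, one gets $C^2(X_i,X_j,X_k)=-\Gamma_{ij}^k-\Gamma_{ik}^j$. Therefore
\begin{equation*}
tr_{h^2}C^2(X_i,\cdot,\cdot)=\sum_{j=1}^n C^2(X_i,X_j,X_j)=-2\sum_{j=1}^n\Gamma_{ij}^j,
\end{equation*}
so the apolarity condition $tr_{h^2}C^2(X_i,\cdot,\cdot)=0$ for all $i$ is literally equation \eqref{eq:Equiaffine}, giving $(1)\Leftrightarrow(2)$. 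Chaining the two equivalences closes the loop.

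The only genuine subtlety — the step I would be most careful about — is the invariance/well-definedness issue: apolarity and equiaffinity are properties of the pair $(C^2,h^2)$ and of $(\nabla,g)$, which in principle could depend on the choice of $\eta$ within the affine normal plane bundle (recall $\bar\eta=\eta+\lambda\xi$ is equally admissible). Here I would point out that changing $\eta$ to $\eta+\lambda\xi$ leaves $h^2$, $\nabla$, and $\tau_1^1$ unchanged — only $h^1$, $\tau_1^2$-type data shift — so $C^2$ and the trace computation above are unaffected; likewise one must check the computation is independent of the choice of $g_\xi$-orthonormal frame, which follows because the vanishing of $tr_{h^2}C^2(X,\cdot,\cdot)$ and of $\nabla\omega_g$ are tensorial/frame-independent statements even though the intermediate Christoffel expressions are not. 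Once this bookkeeping is dispatched, the proof is the short computation above.

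\begin{proof}
Fix $\eta$ in the affine normal plane bundle and a $g_\xi$-orthonormal local frame $\{X_1,\dots,X_n\}$ of $TN$, so that $h^2(X_i,X_j)=\delta_{ij}$ and, by \eqref{eq:AfNormal3}, $\tau_1^2=\tau_2^2=0$.

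$(2)\Leftrightarrow(3)$: This is the content of the previous proposition; differentiating $[X_1,\dots,X_n,\eta,\xi]=1$ in the direction $X_i$ gives $\sum_{k=1}^n\Gamma_{ik}^k+\tau_1^1(X_i)=0$, and by \eqref{eq:Equiaffine} the pair $(\nabla,g)$ is equiaffine iff $\sum_k\Gamma_{ik}^k=0$ for all $i$, iff $\tau_1^1(X_i)=0$ for all $i$, iff $\xi$ is parallel.

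$(1)\Leftrightarrow(2)$: Since $\tau_1^2=\tau_2^2=0$ and $h^2$ coincides with $g_\xi$, we have $C^2(X,Y,Z)=(\nabla_Xh^2)(Y,Z)$. Evaluating on the frame and using that $h^2(X_j,X_k)=\delta_{jk}$ is constant,
\begin{equation*}
C^2(X_i,X_j,X_k)=-h^2(\nabla_{X_i}X_j,X_k)-h^2(X_j,\nabla_{X_i}X_k)=-\Gamma_{ij}^k-\Gamma_{ik}^j.
\end{equation*}
Hence
\begin{equation*}
tr_{h^2}C^2(X_i,\cdot,\cdot)=\sum_{j=1}^n C^2(X_i,X_j,X_j)=-2\sum_{j=1}^n\Gamma_{ij}^j,
\end{equation*}
so $C^2$ is apolar with respect to $h^2$ if and only if $\sum_{j=1}^n\Gamma_{ij}^j=0$ for $1\leq i\leq n$, which is precisely \eqref{eq:Equiaffine}, i.e. $(\nabla,g)$ equiaffine. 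Replacing $\eta$ by $\bar\eta=\eta+\lambda\xi$ changes neither $h^2$ nor $\nabla$ nor $\tau_1^1$, so all statements are independent of the choices made, completing the proof.
\end{proof}
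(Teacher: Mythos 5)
Your proof is correct and follows essentially the same route as the paper: the equivalence $(2)\Leftrightarrow(3)$ is the preceding proposition (differentiating $[X_1,\dots,X_n,\eta,\xi]=1$ to get $\sum_k\Gamma_{ik}^k+\tau_1^1(X_i)=0$), and $(1)\Leftrightarrow(2)$ is the observation that in a $g_\xi$-orthonormal frame the apolarity of $(C^2,h^2)$ reduces to the same condition $\sum_j\Gamma_{ij}^j=0$ as equiaffinity. The only difference is that you spell out the computation $C^2(X_i,X_j,X_k)=-\Gamma_{ij}^k-\Gamma_{ik}^j$ and the independence of the choice of $\eta$ within the affine normal plane bundle, both of which the paper leaves implicit.
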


\subsection{Examples}

We now give some examples of submanifolds $N\subset M$ that admit a parallel vector field $\xi$.

\begin{exam}\label{ex:Curvas}
When $n=1$, the vector field $\xi$ defined in section \ref{sec:Curves} is parallel. Assume that $\gamma(t)$ is an adapted parameterization. Then
$\gamma'(t)$ is $g_{\xi}$-unitary and $\{\xi(t),\gamma''(t)\}$ is a basis for the affine normal plane $\sigma(\xi)$. 
\end{exam}

\begin{exam}\label{ex:Hyperplanar} {\it Hyperplanar submanifolds}.
Assume $N=M\cap H$, where $H$ is a hyperplane.  Fix a point $p_0\in N$ and let $\xi(p_0)$ be a vector in the osculating Darboux direction at $p_0$. We can extend $\xi$ to a vector field along $N$ in the osculating Darboux direction such that $\xi(p)=\xi(p_0)+\e(p)$, where $\e(p)\in H$, for any $p\in N$. Then $D_X\xi\in H\cap T_pM=T_pN$. The metric $g_{\xi}$ defined by equation \eqref{eq:Metric} coincides with the Blaschke metric of $N\subset H$
and the affine Blaschke normal $\zeta$ of $N\subset H$ belongs to the affine normal plane.
\end{exam}

\begin{exam} {\it Visual contour submanifolds} (\cite{Cipolla}). Suppose that all lines $\xi_0$ meet at a point $O$. Then we can choose $\xi=\lambda\xi_0$
such that $\xi(p)=O-p$. Differentiating we obtain $-S_1X+\tau_1^1(X)\xi=-X$. We conclude that $S_1=Id$ and $\tau_1^1=0$. Thus $\xi$ is parallel.
\end{exam}

\begin{exam}\label{ex:Hyperquadric}
Suppose $M$ is a hyperquadric and $N\subset M$ is arbitrary. Using the notation of remark \ref{rem:BlaschkeHyp}, we have that $h(\xi,X)=0$, for any $X\in T_pN$.
Take $\xi$ such that $h(\xi,\xi)=1$. Differentiating and using that the cubic form $C$ of $M\subset\R^{n+2}$  is zero we get $h(\tilde\nabla_X\xi,\xi)=0$. Thus
$\tilde\nabla_X\xi\in T_pN$ and so $\xi$ is parallel. It is not difficult to see that in this case the affine normal Blaschke vector field is contained in the affine normal
plane bundle and the metric $g_{\xi}$ is the restriction of the Blaschke metric of $M$ to $N$ (see \cite{Nomizu}). 
\end{exam}

\subsection{Blaschke metric and the parallelism condition}

In this section we shall assume that  $M\subset\R^{n+2}$ is non-degenerate (see remark \ref{rem:BlaschkeHyp}). Denote by
$\zeta$ the affine Blaschke vector field of $M\subset\R^{n+2}$ and by $h$ the Blaschke metric of $M$. We shall also assume that $\xi$ is parallel 
and look for conditions under which $\zeta$ belongs to the affine normal plane.

\begin{prop}\label{prop:BlaschkeMetric}
Let $\xi$ be a parallel vector field along $N$ and assume that $h(\xi,\xi)=1$ at a certain point $p_0\in N$.
The conditions of lemma \ref{lem:Equivalences} hold if and only if $C(X,\xi,\xi)=0$, for any $X\in TN$.
\end{prop}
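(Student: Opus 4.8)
The plan is to use the characterization from Lemma \ref{lem:Equivalences}: since $\xi$ is parallel, the equivalences there reduce the statement to showing that $h(\xi,\xi)=1$ holds along all of $N$ if and only if $C(X,\xi,\xi)=0$ for every $X\in TN$, given that $h(\xi,\xi)=1$ at the single point $p_0$. Indeed, once we know $h(\xi,\xi)\equiv 1$ on $N$, item 1 of Lemma \ref{lem:Equivalences} holds and hence all six equivalent conditions hold; conversely those conditions force $h(\xi,\xi)=1$ everywhere. So the whole proposition is an ODE-type rigidity statement: a function that equals $1$ at one point is identically $1$ precisely when its derivative vanishes in the relevant directions.

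First I would compute $X\bigl(h(\xi,\xi)\bigr)$ for $X\in TN$ using the Blaschke structure equations of $M\subset\R^{n+2}$. Writing $D_X\xi=\tilde\nabla_X\xi+h(X,\xi)\zeta$ and recalling from Remark \ref{rem:BlaschkeHyp} that $h(X,\xi)=0$ for all $X\in TN$ (this is exactly where non-degeneracy of $M$ and the fact that $\xi$ is the osculating Darboux direction enter), we get $D_X\xi=\tilde\nabla_X\xi$, i.e. $D_X\xi$ is tangent to $M$. Then
\[
X\bigl(h(\xi,\xi)\bigr)=2\,h(\tilde\nabla_X\xi,\xi)+(\tilde\nabla_X h)(\xi,\xi).
\]
Now the cubic form of $M\subset\R^{n+2}$ is $C(X,Y,Z)=(\tilde\nabla_X h)(Y,Z)$ (with the Blaschke normalization), so $(\tilde\nabla_X h)(\xi,\xi)=C(X,\xi,\xi)$. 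It remains to handle the term $h(\tilde\nabla_X\xi,\xi)$: differentiating the identity $h(\xi,Y)=0$ (valid for all $Y\in TN$) in the direction $X\in TN$ gives $h(\tilde\nabla_X\xi,Y)+h(\xi,\tilde\nabla_X Y)+C(X,\xi,Y)=0$; but $\xi$ parallel means $D_X\xi$, hence $\tilde\nabla_X\xi$, is tangent to $N$, so $\tilde\nabla_X\xi=\nabla_X\xi+(\text{component along }\xi)$, and pairing the tangential-to-$N$ part against $\xi$ via $h$ gives zero since $h(\xi,\cdot)$ annihilates $TN$. Thus $h(\tilde\nabla_X\xi,\xi)=h(\xi,\xi)\,\langle\text{the }\xi\text{-coefficient of }\tilde\nabla_X\xi\rangle$; a cleaner route is to note $h(\tilde\nabla_X\xi,\xi)=\tfrac12 X\bigl(h(\xi,\xi)\bigr)-\tfrac12 C(X,\xi,\xi)$ is circular, so instead I would directly use parallelism to write $\tilde\nabla_X\xi = \rho(X)\,\xi \pmod{TN}$ for some $1$-form $\rho$, deduce $h(\tilde\nabla_X\xi,\xi)=\rho(X)\,h(\xi,\xi)$, and track $\rho$ via the volume-compatibility of the Blaschke connection.

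Putting the pieces together, $X\bigl(h(\xi,\xi)\bigr)=\bigl(2\rho(X)\bigr)h(\xi,\xi)+C(X,\xi,\xi)$, a linear first-order system for the scalar $\varphi:=h(\xi,\xi)$ along $N$. If $C(X,\xi,\xi)=0$ for all $X$, then $\varphi$ satisfies $X\varphi = 2\rho(X)\varphi$ with $\varphi(p_0)=1$; since $N$ is connected and $1$ is never a zero of this homogeneous linear equation, $\varphi\equiv 1$ (integrate along paths, or observe $\log\varphi$ has prescribed derivative $2\rho$ which is closed by the structure equations). Hence the conditions of Lemma \ref{lem:Equivalences} hold. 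Conversely, if those conditions hold then $\varphi\equiv 1$, so $X\varphi\equiv 0$, and the displayed identity forces $C(X,\xi,\xi)=2\rho(X)\varphi - X\varphi = 2\rho(X)$; one then checks $\rho\equiv 0$ — this follows because $[X_1,\dots,X_n,\zeta,\xi]=1$ is constant (item 3 of Lemma \ref{lem:Equivalences}) and differentiating it in direction $X$ picks up exactly $\rho(X)$ plus the trace of $\nabla_X$, and the $g_\xi$-orthonormality together with equiaffinity (Proposition \ref{prop:EquivApolar}, available since $\xi$ is parallel) kills the trace term — giving $C(X,\xi,\xi)=0$.

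The main obstacle I anticipate is bookkeeping the $\xi$-coefficient $\rho$ of $\tilde\nabla_X\xi$ correctly and showing it vanishes under the hypotheses of Lemma \ref{lem:Equivalences}: this is where the normalization $[X_1,\dots,X_n,\zeta,\xi]=1$, the equiaffinity of $(\nabla,g_\xi)$, and the parallelism of $\xi$ all have to be used in concert, and a sign or a misattributed term there would break the converse direction. The forward direction, by contrast, is a clean uniqueness argument for a linear ODE once the identity for $X(h(\xi,\xi))$ is in hand.
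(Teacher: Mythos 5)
Your overall strategy is the same as the paper's: differentiate $\varphi:=h(\xi,\xi)$ along $N$, identify $X\varphi$ with $C(X,\xi,\xi)$, and conclude by a constancy argument from the value at $p_0$. But there is a genuine gap in how you handle the term $h(\tilde\nabla_X\xi,\xi)$, and it breaks one direction. By the paper's definition, $\xi$ parallel means $D_X\xi$ is tangent to $N$ (not merely to $M$); combined with $h(X,\xi)=0$ for all $X\in TN$ (Remark \ref{rem:BlaschkeHyp}) this gives $D_X\xi=\tilde\nabla_X\xi\in TN$, and since $h(\xi,\cdot)$ annihilates $TN$ you get $h(\tilde\nabla_X\xi,\xi)=0$ outright. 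In other words, the $1$-form $\rho$ you introduce is identically zero by hypothesis, and the identity is simply $X\bigl(h(\xi,\xi)\bigr)=C(X,\xi,\xi)$. You had all three ingredients in hand, but then retreated to writing $\tilde\nabla_X\xi=\rho(X)\xi \pmod{TN}$ with $\rho$ undetermined, which contradicts the parallelism you had just invoked.

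Because $\rho$ is left undetermined, the direction $C(X,\xi,\xi)=0\Rightarrow\varphi\equiv 1$ fails as written: the equation $X\varphi=2\rho(X)\varphi$ with $\varphi(p_0)=1$ does not force $\varphi\equiv 1$ (its solution is $\exp\bigl(2\int\rho\bigr)$ along paths; ``never vanishing'' is not ``constant''). Your other direction reaches the right conclusion only after a detour through equiaffinity and the volume normalization to kill $\rho$, which is unnecessary, only sketched, and carries a sign slip against your own displayed identity (it should read $C(X,\xi,\xi)=-2\rho(X)$ there). With $\rho\equiv 0$ observed at the outset, both directions collapse to the paper's two-line proof: if the conditions of Lemma \ref{lem:Equivalences} hold then $\varphi\equiv 1$, so $C(X,\xi,\xi)=X\varphi=0$; conversely $C(X,\xi,\xi)=0$ gives $X\varphi=0$, hence $\varphi\equiv\varphi(p_0)=1$ on connected $N$.
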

\begin{proof}
Assume that $h(\xi,\xi)=1$. Differentiating this equation in the direction $X\in TN$ and using that $\xi$ is parallel we obtain
$C(X,\xi,\xi)=\tilde\nabla(X,\xi,\xi)=0$, for any $X\in TN$. Conversely, if $C(X,\xi,\xi)=0$ we obtain $X(h(\xi,\xi))=0$, for any $X\in TN$. Thus
$h(\xi,\xi)=1$ at $N$.
\end{proof}

\begin{exam}
Consider the surface $M\subset\R^3$ graph of 
$$
f(x,y)=\frac{1}{2}(x^2+y^2)+\frac{c}{6}(x^3-3xy^2), 
$$
where $c\neq 0$. Let $\gamma=N$ be the intersection of $M$ with the plane $y=0$. Take $\xi$ parallel as in section \ref{sec:Curves}. Then the affine 
plane is generated by $\{\xi,\eta\}$, where $\eta$ is the affine normal of $\gamma$ (see example \ref{ex:Curvas}). Since $\eta(0,0)=(-c/3,0,1)$, $\xi(0,0)=(0,1,0)$ and 
$\zeta(0,0)=(0,0,1)$, where $\zeta$ denotes the affine Blaschke normal of $M$, we conclude that, if $c\neq 0$, $\zeta$ does not belong to the affine normal plane.
\end{exam}

\section{Transon planes}

\subsection{Transon planes for curves}

The following theorem is a very old result (\cite{Transon},\cite{Juttler}):

\begin{thm}\label{thm:TransonCurves}
Given a surface $M\subset\R^3$, a point $p_0\in M$ and a vector $T\in T_{p_0}M$, consider sections of $M$ by planes $H$ containing $T$ and passing through $p_0$.
Then the affine normal vectors $\eta=\eta(H)$ of these sections at $p_0$ belong to a plane. This plane is called the {\it Transon plane} of $p_0\in M$ in the direction $T$.
\end{thm}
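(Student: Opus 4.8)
The plan is to parametrize the family of plane sections through $p_0$ in direction $T$ by a single angular parameter, compute the affine (Blaschke) normal of each section at $p_0$ in terms of derivatives of $M$, and show that the resulting curve of vectors in $\R^3$ is contained in a fixed affine $2$-plane (through $p_0$). Concretely, I would put $M$ in a normal form adapted to $(p_0,T)$: choose affine coordinates so that $p_0$ is the origin, $T_{p_0}M=\{x_3=0\}$, and $T$ is the $x_1$-axis. Then $M$ is locally the graph $x_3=f(x_1,x_2)$ with $f(0)=0$, $\nabla f(0)=0$; after a further linear change (allowed in equiaffine geometry up to the apolarity normalization) one may take the quadratic part of $f$ to be $\tfrac12(\kappa_1 x_1^2+\kappa_2 x_2^2)$ or, more invariantly, keep it as a general nondegenerate quadratic form $Q$.

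Next I would describe the pencil of planes: a plane $H_\theta$ containing $T$ (the $x_1$-axis) and passing through the origin is spanned by $e_1$ and a vector $v_\theta=\cos\theta\, e_2+\sin\theta\, e_3$. The section $M\cap H_\theta$ is a planar curve through the origin; in the plane $H_\theta$ with coordinates $(x_1,s)$ (where the point is $x_1 e_1+s v_\theta$), the curve is the graph of a function $s\mapsto$ solving $\sin\theta\, s = f(x_1,\cos\theta\, s)$, hence $x_1\mapsto s(x_1)$ with $s(0)=0$, $s'(0)=0$, and leading term governed by $Q(e_1,e_1)/\sin\theta$ (here one needs $\sin\theta\neq0$; the limiting tangential plane $x_3=0$ cuts $M$ in a curve whose affine normal one treats separately or by continuity). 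The classical formula for the affine normal of a plane curve $x_1\mapsto(x_1,s(x_1))$ at a point where $s'=0$ gives, up to the standard cube-root normalization of the parameter, an affine normal vector of the form $a(\theta)\,e_1 + b(\theta)\, v_\theta$ with $a,b$ explicit in the $2$- and $3$-jet of $f$.

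The key point is then purely linear: each affine normal $\eta(H_\theta)$ lies in the plane $\mathrm{span}(e_1,v_\theta)=H_\theta$ itself, and writing $v_\theta=\cos\theta\,e_2+\sin\theta\,e_3$ we get $\eta(H_\theta)=a(\theta)e_1+b(\theta)\cos\theta\,e_2+b(\theta)\sin\theta\,e_3$. To show all these vectors are coplanar (share a common plane through $p_0$), it suffices to exhibit two fixed vectors spanning a plane that contains $\eta(H_\theta)$ for all $\theta$; equivalently, show the $3\times 3$ determinant formed by $\eta(H_{\theta_1}),\eta(H_{\theta_2}),\eta(H_{\theta_3})$ vanishes identically, or that $a(\theta)$, $b(\theta)\cos\theta$, $b(\theta)\sin\theta$ satisfy a fixed linear relation with constant coefficients. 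This is where the specific form of the affine-normal formula enters: one checks that $b(\theta)$ and $a(\theta)$ are (after the cube-root reparametrization) ratios of the relevant $2$- and $3$-jet quantities in such a way that $a(\theta)$ is a fixed linear combination of $b(\theta)\cos\theta$ and $b(\theta)\sin\theta$ — I expect $a(\theta)\sin\theta$ to be a constant multiple of $b(\theta)\cos\theta$ plus a constant multiple of something like $b(\theta)\cos\theta\cdot(\text{stuff})$, collapsing to a genuine constant-coefficient relation once the homogeneity in $\theta$ is tracked. That algebraic collapse is the main obstacle; the rest is bookkeeping.

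I would finish by noting that this is exactly the statement one needs, and that it also identifies the Transon plane: it is spanned by $\xi=e_2$ (the Darboux direction, here the $x_2$-axis — the direction in $T_{p_0}M$ conjugate to $T$ with respect to $Q$) together with the affine normal of the "middle" section; and I would remark that the higher-dimensional refinement (the coincidence of the Transon plane with the affine normal plane bundle $\sigma(\xi)$ precisely when $\xi$ is parallel) is deferred to the later sections, where the computation is organized invariantly rather than in coordinates. The curve case here serves as the motivating prototype, so a coordinate computation is acceptable, and I would keep it short by invoking the classical plane-curve affine-normal formula rather than rederiving it.
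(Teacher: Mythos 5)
Your setup is sound as far as it goes: the normal form for $M$, the pencil $H_\theta=\operatorname{span}(e_1,v_\theta)$, the observation that $\eta(H_\theta)=a(\theta)e_1+b(\theta)v_\theta$ necessarily lies in $H_\theta$, and the correct reduction of the theorem to a constant-coefficient linear relation among $a(\theta)$, $b(\theta)\cos\theta$, $b(\theta)\sin\theta$. But that relation \emph{is} the theorem, and you never establish it: you write that you ``expect'' the algebraic collapse and call it ``the main obstacle.'' So the proof is incomplete precisely at its only nontrivial step. Worse, the shape of the relation you guess ($a(\theta)\sin\theta$ as a combination involving $b(\theta)\cos\theta$) is not the right one: since the Transon plane contains the conjugate tangent direction $e_2$, the identity to prove is $a(\theta)=c\,b(\theta)\sin\theta$ for a fixed constant $c$, and verifying it requires tracking how both the cubic coefficient of $s(x_1)$ and the affine-arclength normalization (which depends on the choice of area form on each $H_\theta$, a point you also leave unaddressed) scale with $\sin\theta$.

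The paper sidesteps all of this bookkeeping. It does not prove Theorem~\ref{thm:TransonCurves} directly (it cites Transon and J\"uttler) but its proof of the $n$-dimensional generalization, Theorem~\ref{thm:Transon}, covers the curve case and is worth comparing with your plan: write the pencil as $y=\lambda z$, project every section onto the single fixed plane $y=0$ along the direction $e_2$, and observe that after substituting $y=\lambda z$ and using $z=O(|x|^2)$ all the $\lambda$-dependent terms are $O(4)$, so every projected curve has the same $3$-jet $z=\tfrac12 x^2+P_3(x)+O(4)$. Since the affine normal depends only on the $3$-jet and the projection is linear, all the $\eta(H)$ project onto a common direction $v_0$, hence lie in the fixed plane $\operatorname{span}(e_2,v_0)$. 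If you want to salvage your coordinate approach, either carry out the affine-normal computation for $s(x_1)$ explicitly and exhibit the constant $c$, or adopt this projection trick, which replaces the trigonometric identity by a two-line jet computation.
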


The statement of this theorem needs an explanation: A parameterization $\gamma(s)$ of $\gamma=M\cap H$ by affine arc-length is defined by the condition
$[\gamma'(s),\gamma''(s), \xi_0]=1$, for some constant vector field $\xi_0$. Then the affine normal at $p_0=\gamma(s_0)$ is just $\gamma''(s_0)$. We remark
that, instead of a constant vector field $\xi_0$, we may also consider here a vector field $\xi$ in the osculating Darboux direction of $\gamma\subset M$ such that
$\xi(p)=\xi(p_0)+\e(p)$, $\e(p)\in H$, for any $p\in\gamma$, where
$\xi(p_0)$ is any vector in the osculating Darboux direction of $\gamma\subset M$ at $p_0$.

\subsection{Transon planes in arbitrary dimensions}

Let $M\subset\R^{n+2}$ be a hypersurface, $p_0\in M$ and $T$ a $n$-dimensional subspace
contained in $T_{p_0}M$. For a hyperplane $H$ containing $T$, consider the vector field $\xi$
along $N=M\cap H$ given by $\xi(p)=\xi(p_0)+\e(p)$, $\e(p)\in H$, as in example \ref{ex:Hyperplanar}.
The metric $g_{\xi}$ in $N$ is, up to a constant, the Blaschke metric of $N$, and if we choose the unique $\eta\in H$ satisfying equations \eqref{eq:AfNormal1}, \eqref{eq:AfNormal2},
\eqref{eq:AfNormal3} and \eqref{eq:BarEta}, then $\eta$ is the affine Blaschke normal of $N\subset H$.

Next theorem says that, as in the case of curves, this affine normal plane at $p_0$ is independent of the hyperplane $H$, and we shall keep the name Transon plane for it.

\begin{thm}\label{thm:Transon}
Given a hypersurface $M\subset\R^{n+2}$, a point $p_0\in M$ and a $n$-subspace $T\subset T_{p_0}M$, 
consider sections of $M$ by hyperplanes $H$ containing $T$ and passing through $p_0$.
Assume that one section $H_0\cap M$ is non-degenerate in the sense of section \ref{sec:Xi}. Then $H\cap M$ is non-degenerate for any $H$ and
the affine normal vectors $\eta=\eta(H)$ of these sections at $p_0$ belong to a plane.
\end{thm}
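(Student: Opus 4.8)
The plan is to reduce Theorem \ref{thm:Transon} to the curve case (Theorem \ref{thm:TransonCurves}) by slicing, together with the invariance statements already established for the osculating Darboux direction and the affine normal plane bundle. First I would fix the $n$-subspace $T\subset T_{p_0}M$ and choose affine coordinates in which $p_0$ is the origin, $T_{p_0}M=\{x_{n+2}=0\}$, and $M$ is the graph of a function $f(t_1,\dots,t_n,y)$ with vanishing $1$-jet at the origin, exactly as in the setup preceding Lemma 3.4; I would moreover arrange that $T$ is the span of $e_1,\dots,e_n$, so that hyperplanes $H$ containing $T$ are precisely those of the form $x_{n+1}=\text{(linear in the }x_i)$, i.e.\ the slices $\{y=\ell(t)\}$ for a linear form $\ell$. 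For such an $H$, the submanifold $N_H=M\cap H$ is the graph $y=\ell(t)$ inside $H$, and by Lemma 3.4 its osculating Darboux direction at $0$ is $\psi_y$ and the associated shape operator $S_1$ is the Hessian-type matrix $(f_{t_it_jy}(0))$ — independent of $\ell$. This already gives the first assertion: non-degeneracy of $N_H$ at $0$ means $(h^2(X_i,X_j))=(f_{t_it_j}(0))$ is invertible, which does not depend on $H$, so if one section is non-degenerate all are.

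Next I would identify the affine normal vector $\eta(H)$ concretely. Since $N_H\subset H$ is a hypersurface of the hyperplane $H\cong\R^{n+1}$, and since $\xi=\psi_y$ is (after rescaling) parallel along $N_H$ with $g_\xi$ equal to the Blaschke metric of $N_H\subset H$ (Example \ref{ex:Hyperplanar}), the vector $\eta(H)$ is the affine Blaschke normal of $N_H\subset H$, normalized by \eqref{eq:AfNormal1}. The Blaschke normal of a graph $y=\ell(t)$ over $T$ with second fundamental form $(f_{t_it_j})$ at the origin is a standard computation: it is a fixed transversal direction to $T$ inside $H$ plus a tangential correction determined by the derivatives of the conormal, and at the base point the relevant data are the first and second derivatives of $f$ restricted to the slice. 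Writing $\eta(H)=\psi_y(0)+(\text{tangential term})+c(H)\,(\text{normal term})$ and tracking how each piece depends on the linear form $\ell$ defining $H$, I expect to find that $\eta(H)$ always lies in the fixed plane spanned by $\psi_y(0)=(0,\dots,0,1,0)$ and the "vertical" direction $(0,\dots,0,1)$ — equivalently the plane spanned by $\xi(0)$ and any fixed transversal to $T_{p_0}M$. The cleanest way to see this is: the conditions \eqref{eq:AfNormal1}–\eqref{eq:AfNormal3} pin down $\eta(H)$ modulo multiples of $\xi$, and the apolarity/equiaffinity that holds because $\xi$ is parallel (Proposition \ref{prop:EquivApolar}) fixes the remaining multiple; one then checks the resulting vector has no component outside $\mathrm{span}\{\xi(0),v\}$ for a fixed $v\notin T_{p_0}M$, by differentiating $[X_1,\dots,X_n,\eta,\xi]=1$ and $h^2(X_i,X_j)=\delta_{ij}$ at the origin.

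Alternatively — and this may be the shorter route — I would invoke Theorem \ref{thm:TransonCurves} directly by a further slicing: intersect each $N_H$ with a generic $2$-plane through $p_0$ meeting $T$ in a line, reducing to curves in the surface $M\cap(\text{3-space})$, and use that the affine normal of the curve, the affine normal of $N_H$, and the Transon plane are compatible under such sections (this compatibility is itself a consequence of the naturality of the Blaschke normal under linear sections, applied in $H$). The Transon plane for the curve is $2$-dimensional and independent of the cutting $2$-plane by Theorem \ref{thm:TransonCurves}, and a dimension/continuity argument then forces all the $\eta(H)$ to lie in a single $2$-plane.

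The main obstacle I anticipate is the bookkeeping in the second paragraph: showing that the tangential correction term in the Blaschke normalization does not contribute a component transverse to the target plane as $H$ varies. This is where the parallelism of $\xi$ (hence $\tau_1^1=\tau_1^2=\tau_2^2=0$ and the equiaffine property) must be used essentially, rather than just formally — the correction $-\sum_k\tau_2^2(X_k)X_k$ appearing in the construction of $\eta$ in section \ref{sec:AffineNormalBundle} lives in $TN_H$, but $TN_H\subset H$ varies with $H$, so one must check its image under the ambient embedding still stays in the fixed plane, which ultimately comes down to the explicit form of $S_1$ and $f$ from Lemma 3.4 being $H$-independent.
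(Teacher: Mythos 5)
There is a genuine gap, and it occurs in two places. First, your parametrization of the family of hyperplanes is wrong: the hyperplanes of $\R^{n+2}$ containing the $n$-plane $T=\operatorname{span}\{e_1,\dots,e_n\}$ form a one-parameter pencil, namely $\{\alpha y=\beta z\}$ in the coordinates $(t,y,z)$, not the $n$-parameter family $\{y=\ell(t)\}$ --- a hyperplane $y=\ell(t)$ with $\ell$ linear contains $T$ only when $\ell\equiv 0$. Moreover the actual sections $M\cap H_\lambda$ with $H_\lambda=\{y=\lambda z\}$ are graphs $y=g(t)$ with $g$ vanishing to second order, not linear graphs. So the family you feed into Lemma 3.4 is not the family in the statement of Theorem \ref{thm:Transon}, and the first paragraph of your argument establishes $H$-independence of the wrong data. (Once the pencil is corrected, your non-degeneracy argument does survive: the second fundamental form of $M\cap H_\lambda$ restricted to $T$ at $p_0$ is $(f_{t_it_j}(0))$ for every $\lambda$, which is the paper's first assertion.)

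Second, the coplanarity of the $\eta(H)$ --- the actual content of the theorem --- is exactly the step you defer (``I expect to find\dots'', ``the cleanest way to see this is\dots''), and the one idea that closes it is missing. The paper normalizes the $3$-jet of $M$ adapted to $H_0$, writes $H_\lambda=\{y=\lambda z\}$, and observes that on the section $z=O(|x|^2)$, so after substituting $y=\lambda z$ every $\lambda$-dependent term ($ay^2$, $yP_2(x)$, $y^2P_1(x)$, $y^3$) is absorbed into $O(4)(x)$; hence the projections of all sections onto the fixed hyperplane $y=0$ along the $y$-direction have the same expansion $z=\tfrac12|x|^2+P_3(x)+O(4)$. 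Since the Blaschke normal at $0$ is determined by this jet and is equivariant under the linear projection, every $\eta(H_\lambda)$ lies in the plane spanned by the common projected normal and the $y$-direction. Your proposed alternative via Theorem \ref{thm:TransonCurves} does not repair this: the ``compatibility of the Blaschke normal under linear sections'' that you invoke is precisely what Transon's theorem asserts and is false for restriction of normals in general, so that route is circular as stated.
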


\begin{proof}
Assume that $p_0$ is the origin, the tangent to $M$ is the plane $z=0$ and that $H_0$ is the hyperplane $y=0$. The non-degeneracy hypothesis implies that we can
find a local coordinate system such that $M$ is given by
\begin{equation}
z=\frac{1}{2}(x_1^2+...+x_n^2+ay^2)+P_3(x)+yP_2(x)+y^2P_1(x)+P_0y^3+O(4)(x,y).
\end{equation}
where $a\in\R$, $P_k(x)$ is homogeneous of degree $k$ in $x=(x_1,...,x_n)$ and $O(4)(x,y)$ denote terms of degree $\geq 4$ in $(x,y)$.
Consider the hyperplane $H_{\lambda}$ of equation $y=\lambda z$. The projection
of these sections in the $xz$-hyperplane is given by
\begin{equation}
z=\frac{1}{2}(x_1^2+...+x_n^2+a\lambda^2z^2)+P_3(x)+\lambda zP_2(x)+\lambda^2z^2P_1(x)+P_0\lambda^3z^3+O(4)(x,z),
\end{equation}
where $O(4)(x,z)$ denote terms of degree $\geq 4$ in $(x,z)$. This curve can be re-written as
\begin{equation}
z=\frac{1}{2}(x_1^2+...+x_n^2)+P_3(x)+O(4)(x).
\end{equation}
where $O(4)(x)$ denotes terms of degree $\geq 4$ in $x$.
This implies that the projection of the affine normal vector does not depend on $\lambda$.
\end{proof}

\subsection{Transon planes for general submanifolds}

\begin{lem}\label{lemma:lema1}
Let $\bar{N}$ be the image of $N$ by the projection $\pi:\R^{n+2}\to H$ in a hyperplane $H$ along the constant direction $\xi(p_0)$.
The Blaschke affine normal $\tilde\eta$ of $\bar{N}$ at $p_0$ belongs to the Transon plane.
\end{lem}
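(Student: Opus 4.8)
The plan is to reduce to the local normal form for $M$ used in the proof of Theorem~\ref{thm:Transon} and then to recognize $\bar N$, to third order at $p_0$, as one of the plane sections whose affine normals span the Transon plane.

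First I would place $p_0$ at the origin and choose coordinates exactly as in that proof, so that $T_{p_0}M=\{z=0\}$, $T=T_{p_0}N=\{y=z=0\}$ and $M$ is the graph $z=f(x,y)$ with
\[
z=\tfrac12(x_1^2+\cdots+x_n^2+ay^2)+P_3(x)+yP_2(x)+y^2P_1(x)+P_0y^3+O(4)(x,y).
\]
Parameterizing $N$ by $x\mapsto(x,y(x),z(x))$, we have $y(x)=O(|x|^2)$ since $T_{p_0}N=\{y=z=0\}$, and substituting into the equation of $M$ gives $z(x)=\tfrac12|x|^2+P_3(x)+O(4)(x)$, because the remaining terms $ay(x)^2$, $y(x)P_2(x)$, $\dots$ are all $O(4)$. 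The first step is to check that, in these coordinates, the osculating Darboux direction of $N$ at $p_0$ is the $y$-axis: writing a Darboux vector field along $N$ in the frame $\{\psi_{x_i},\psi_y\}$ of $TM$ as $\xi=\sum_i\alpha_i(x)\psi_{x_i}+\psi_y$ (partials of $\psi(x,y)=(x,y,f(x,y))$ taken along $N$), the $z$-component of $D_{\partial_{x_j}}\xi$ at the origin equals $\alpha_j(0)$, using $f_{x_ix_j}(0)=\delta_{ij}$, $f_{x_iy}(0)=0$ and $y_{x_j}(0)=0$; so the requirement $D_{\partial_{x_j}}\xi(p_0)\in T_{p_0}M$ forces $\alpha_j(0)=0$ and hence $\xi(p_0)=e_y$ up to scale. (When $M$ is non-degenerate this is Remark~\ref{rem:BlaschkeHyp}.)

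Thus $\pi$ is the projection along $e_y$. The hyperplanes containing $T$ and transversal to $e_y$ are precisely the $H_\lambda=\{y=\lambda z\}$, so we may take $H=H_\lambda$. Projecting $(x,y(x),z(x))$ along $e_y$ onto $H_\lambda$ gives $(x,\lambda z(x),z(x))$, which in the affine chart $(x,z)$ on $H_\lambda$ is the graph $z=z(x)=\tfrac12|x|^2+P_3(x)+O(4)(x)$. The proof of Theorem~\ref{thm:Transon} shows that $H_\lambda\cap M$, read in the same chart, is $z=\tfrac12|x|^2+P_3(x)+O(4)(x)$ as well; hence $\bar N$ and $H_\lambda\cap M$ have the same $3$-jet at $p_0$ inside $H_\lambda$.

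Finally, the Blaschke affine normal of a hypersurface graph at a point depends only on the $3$-jet there: after normalizing $f_{ij}(0)=\delta_{ij}$, its transversal part is fixed and its tangential part is $-\tfrac1{n+2}\nabla(\Delta P_3)$. Therefore $\tilde\eta=\eta(H_\lambda)$, which lies in the Transon plane by definition. For a hyperplane $H$ transversal to $e_y$ but not containing $T$, the same chart computation shows $\bar N$ is still the graph $z=z(x)$, so $\tilde\eta$ differs from $\eta(H_0)$ only by a multiple of $e_y$; since $\eta(H_0)$ and $e_y$ both lie in the Transon plane ($e_y$ because $\eta(H_\lambda)-\eta(H_0)$ is a multiple of $e_y$), the conclusion follows. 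I expect the only delicate points to be the first step — showing the Darboux direction is $e_y$ in these coordinates — together with the bookkeeping of which terms are $O(4)$ under the substitutions; granted those, the statement is immediate from Theorem~\ref{thm:Transon} and its proof.
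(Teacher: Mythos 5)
Your proof is correct and follows essentially the same route as the paper's: the paper merely notes that in the normal form of Theorem \ref{thm:Transon} one may take $N$ to be $y=P_2(x,z)+O(3)$ and that ``the same argument as above'' applies, which is exactly your computation that the projection of $N$ along $\xi(p_0)=e_y$ has the same $3$-jet $z=\tfrac12|x|^2+P_3(x)+O(4)(x)$ as the sections $H_\lambda\cap M$, hence the same affine normal. The details you supply (verifying that the osculating Darboux direction at $p_0$ is $e_y$ in these coordinates, the $3$-jet dependence of the Blaschke normal, and the case of a hyperplane $H$ not containing $T$) are precisely the steps the paper leaves implicit.
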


\begin{proof}
We may assume that $N$ is defined by $y=P_2(x,z)+O(3)$. Then the same argument as above proves the proposition.
\end{proof}

Denote by $\bar\eta$ the vector field along $\bar{N}$ such that $\bar\eta(\pi(x))=\eta(x)$ and by $\bar{X}=\pi_*(X)$ the projection of $X$ in $H$.
Let $\eta$ be the transversal vector field along $N$ in the affine normal plane bundle such that $\eta$ is parallel to $H$.

\begin{lem}\label{lemma:bar}
We have that
\begin{enumerate}
\item $\bar{h}^2(X,Y)(p_0)=h^2(X,Y)(p_0)$, for any $X,Y$ tangent to $N$.
\item $\bar\nabla(\bar{h}^2)(X,Y,Z)(p_0)=\nabla(h^2)(X,Y,Z)(p_0)$, for any $X,Y,Z$ tangent to $N$.
\end{enumerate}
Thus $\eta$ is apolar if and only if $\bar\eta$ is apolar.
\end{lem}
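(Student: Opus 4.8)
The plan is to work in the local coordinates already set up in the proof of Theorem~\ref{thm:Transon}: take $p_0$ to be the origin, $T_{p_0}M=\{z=0\}$, $H=\{y=0\}$, and the projection $\pi$ along $\xi(p_0)=\partial_y$. By Lemma~\ref{lemma:lema1}'s normalisation we may write $N$ as the graph $y=P_2(x,z)+O(3)$ over $\bar N\subset H$, so that $\pi$ identifies a neighbourhood of $p_0$ in $N$ with a neighbourhood of $p_0$ in $\bar N$, and $\pi_*$ is an isomorphism $T_pN\to T_{\pi(p)}\bar N$ which at $p_0$ is the identity on $\{y=0\}$ (since $dP_2$ vanishes at the origin). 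The key structural point is that $\eta$ was chosen parallel to $H$, so $\eta=\bar\eta\circ\pi$ literally as $\R^{n+2}$-valued maps, and $\xi(p_0)$ is transverse to $H$; thus decomposing a derivative $D_XY$ of vector fields on $N$ against the frame $\{\,\cdot\,,\xi,\eta\}$ and projecting by $\pi_*$ (which kills the $\xi(p_0)$-component and is the identity on $H$) should, at the single point $p_0$, match the analogous decomposition of $D_{\bar X}\bar Y$ against $\{\,\cdot\,,\bar\xi,\bar\eta\}$ for $\bar N\subset H$.

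For item (1): extend $X,Y$ to vector fields on $N$ and let $\bar X=\pi_*X$, $\bar Y=\pi_*Y$ on $\bar N$. Write $D_XY=\nabla_XY+h^1(X,Y)\xi+h^2(X,Y)\eta$ and apply $\pi_*$. Because $\pi$ is linear, $\pi_*(D_XY)=D_{\bar X}(\pi_*Y)=D_{\bar X}\bar Y$ wherever $\pi_*Y$ extends $\bar Y$ — one must be slightly careful that $\pi_*$ of the vector field $Y$ along $N$ agrees with $\bar Y$ as a vector field on $\bar N$, which holds since $\pi$ is a diffeomorphism onto its image here. Now $\pi_*\xi$ is a multiple of $\bar\xi$ up to a tangential term, $\pi_*\eta=\bar\eta$, and $\pi_*\nabla_XY$ is tangent to $\bar N$; comparing with $D_{\bar X}\bar Y=\bar\nabla_{\bar X}\bar Y+\bar h^1\bar\xi+\bar h^2\bar\eta$ and using that the decomposition is direct, the $\bar\eta$-components must agree \emph{at $p_0$}, giving $\bar h^2(\bar X,\bar Y)(p_0)=h^2(X,Y)(p_0)$. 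The restriction to $p_0$ is essential because away from $p_0$ the vector $\xi(p_0)$ need no longer be transverse to $M$ along $N$, and $\eta$ parallel to $H$ is only the affine-normal choice after the normalisation at $p_0$.

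For item (2) one differentiates the identity of item~(1) once more. Since $\bar h^2\circ\pi$ and $h^2$ agree to first order at $p_0$ is not quite enough — one needs $\nabla h^2$, which involves the connection. The cleanest route is to note that both $C^2(X,Y,Z)=(\nabla_Xh^2)(Y,Z)$ and $\bar C^2(\bar X,\bar Y,\bar Z)=(\bar\nabla_{\bar X}\bar h^2)(\bar Y,\bar Z)$ are, by the formula in \S5.3 together with \eqref{eq:AfNormal3}, the totally symmetric cubic forms of the respective immersions, and these are computed from the third-order jet of the immersion at $p_0$; the projection $\pi$ does not change that third-order jet in the relevant components (this is exactly the phenomenon exploited in the proof of Theorem~\ref{thm:Transon}, where the degree-$3$ part $P_3(x)$ was seen to survive the projection unchanged). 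Carrying out the differentiation of $D_XY=\nabla_XY+h^1\xi+h^2\eta$ a second time, applying $\pi_*$, evaluating at $p_0$, and using item~(1) together with $\pi_*\nabla_XY|_{p_0}$ tangent and $\pi_*\eta=\bar\eta$, the $\bar\eta$-components yield $\bar\nabla(\bar h^2)(X,Y,Z)(p_0)=\nabla(h^2)(X,Y,Z)(p_0)$. The apolarity equivalence is then immediate: apolarity of $\eta$ means $\operatorname{tr}_{h^2}C^2(X,\cdot,\cdot)=0$ at $p_0$, and since by (1) and (2) both $h^2$ (hence its inverse) and $C^2$ agree with their barred counterparts at $p_0$, the traces coincide, so $\eta$ is apolar iff $\bar\eta$ is.

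The main obstacle I expect is bookkeeping the "only at $p_0$" subtlety correctly: one must check that differentiating an identity that holds only pointwise at $p_0$ still yields the claimed identity for the derivative — this works only because the \emph{difference} $h^2-\bar h^2\circ\pi$ can be shown to vanish to second order at $p_0$ (its value and first derivatives vanish there), which in turn rests on the fact that $N$ is the graph $y=P_2(x,z)+O(3)$ with no linear term. Making that vanishing-to-second-order statement precise, rather than just the pointwise equality, is the real content; everything else is the routine linear-algebra of splitting $\R^{n+2}=H\oplus\R\,\xi(p_0)$ and tracking components.
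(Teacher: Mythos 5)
Your item (1) is essentially the paper's argument in different notation: the paper writes $\pi(p)=p+\lambda(p)\xi(p_0)$, observes that $X(\lambda)(p_0)=0$ because $T_{p_0}N\subset H$, and reads off the $\eta$-components of $D_Y\pi_*X$ and $D_YX$ at $p_0$; your version, with $\pi_*$ killing the $\xi(p_0)$-direction and fixing $H$, is the same computation. One small slip: $\bar N$ is a hypersurface of $H$, so its Gauss decomposition has no $\bar\xi$ or $\bar h^1$; at $p_0$ this is harmless because $\pi_*\xi(p_0)=0$, but away from $p_0$ the vector $\pi_*\xi=\xi-\xi(p_0)$ feeds into both the tangential and the $\bar\eta$ components, which is why the pointwise bookkeeping has to be done carefully.

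The genuine gap is in item (2), and you have named it yourself without closing it: equality of $h^2$ and $\bar h^2$ at the single point $p_0$ says nothing about $\nabla h^2$ versus $\bar\nabla(\bar h^2)$ there, and neither of your two proposed routes is carried out. The appeal to ``the third-order jet survives the projection'' is a heuristic borrowed from Theorem~\ref{thm:Transon}, not an argument; your second route (differentiate the Gauss formula once more and compare $\bar\eta$-components) can be made to work, but it additionally requires that the induced connections agree at $p_0$, i.e.\ $\bar\nabla_{\bar Y}\bar X=\nabla_YX$ at $p_0$ --- a byproduct of the item (1) computation that you never isolate, and without which the correction terms $h^2(\nabla_ZX,Y)$ cannot be matched with their barred counterparts. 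The paper sidesteps the derivative comparison entirely by a device you miss: it first upgrades item (1) to the identity $\bar h^2(\bar X,\bar Y)=h^2(X,Y)$ at every point near $p_0$, so the push-forward of an $h^2$-orthonormal frame is $\bar h^2$-orthonormal; in such a frame $\nabla_{X_k}h^2(X_i,X_j)=-h^2(\nabla_{X_k}X_i,X_j)-h^2(\nabla_{X_k}X_j,X_i)$ involves no derivative of $h^2$ at all, and the corresponding barred formula reduces item (2) to the two pointwise equalities already in hand at $p_0$. As written, the ``vanishing to second order'' statement you defer to is the entire content of the lemma, so the proposal is a correct plan for (1) but an incomplete proof of (2).
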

\begin{proof}
Write $\pi(p)=p+\lambda(p)\xi(p_0)$. Then $\pi_*X=X+X(\lambda)\xi(p_0)$, which implies that $X(\lambda)(p_0)=0$. Differentiating again we obtain
$$
D_Y\pi_*X=D_YX+YX(\lambda)\xi+X(\lambda)D_Y\xi.
$$
This implies that $\bar{h}^2(\bar{X},\bar{Y})=h^2(X,Y)$ for any $p$ and $\bar\nabla_{\bar{Y}}\bar{X}=\nabla_YX$ at $p_0$. If $\{X_1,...,X_n\}$ is a $h^2$-orthonormal frame, then
$\{\bar{X}_1,...,\bar{X}_n\}$ is a $\bar{h}^2$-orthonormal frame. Thus 
$$
\bar\nabla_{\bar{X}_k}\bar{h}^2(\bar{X}_i,\bar{X}_j)=-\bar{h}^2(\bar\nabla_{\bar{X}_k}\bar{X}_i,\bar{X}_j)-\bar{h}^2(\bar\nabla_{\bar{X}_k}\bar{X}_j,\bar{X}_i)
$$
is equal, at $p_0$, to
$$
\nabla_{{X}_k}{h}^2({X}_i,{X}_j)=-{h}^2(\nabla_{{X}_k}{X}_i,{X}_j)-{h}^2(\nabla_{{X}_k}{X}_j,{X}_i),
$$
thus proving the lemma.
\end{proof}

Next lemma is a general result concerning codimension $1$ immersions:

\begin{lem}\label{lemma:BlaschkeNormal}
Let $S\subset\R^{n+1}$ be a hypersurface, $p_0\in S$, and let $\zeta$ be a transversal vector field such that $D_X\zeta$ at $p_0$ is tangent to $S$, for any
vector field $X$ tangent to $S$. If $\zeta$ is apolar at $p_0$, then $\zeta(p_0)$ is a multiple of the affine normal vector. Conversely,
if $\zeta(p_0)$ is a multiple of the affine normal vector, then $\zeta$ is apolar at $p_0$.
\end{lem}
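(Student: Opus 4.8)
The claim is a local statement about a hypersurface $S\subset\R^{n+1}$ at a point $p_0$, equipped with a transversal field $\zeta$ whose derivative $D_X\zeta$ is tangent to $S$ at $p_0$. Decompose $D_XY=\nabla_XY+h(X,Y)\zeta$ as usual; because $D_X\zeta$ is tangent at $p_0$, the $1$-form $\tau$ in $D_X\zeta=-S_\zeta X+\tau(X)\zeta$ vanishes at $p_0$. I would begin by recalling the affine change-of-transversal formula: the Blaschke (equiaffine) normal $Y$ is characterized, among transversal fields, by the pair (volume-compatibility) $\det(X_1,\dots,X_n,\nabla^{(Y)}_{\cdot}\cdot\text{-normalization})$ together with the apolarity $\mathrm{tr}_h\,(\nabla h)=0$; more precisely, for a non-degenerate $h$, a transversal field $\zeta$ whose induced objects satisfy $\tau=0$ (an equiaffine transversal) is a constant multiple of the Blaschke normal if and only if the induced volume form $\theta=\det(X_1,\dots,X_n,\zeta)$ agrees up to a constant with the metric volume $\omega_h$, which is exactly the apolarity condition $\mathrm{tr}_h(\nabla_X h)=0$ for all $X$.

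**Forward direction.**
Assume $\zeta$ is apolar at $p_0$. Since $\tau(p_0)=0$ and, by apolarity, $\sum_k\Gamma_{kk}^i(p_0)$ is controlled so that $\nabla\theta=0$ relative to $\omega_h$ at $p_0$ in the sense needed, one checks that $\zeta$ and the Blaschke normal $Y$ induce, at $p_0$, the same transversal data up to scaling: both are equiaffine ($\tau=0$) and both satisfy the apolarity normalization. The transversal change formula $Y=\phi\zeta+Z$ (with $Z$ tangent, $\phi$ a function) then forces, upon imposing that both sides be equiaffine and apolar at $p_0$, that $Z(p_0)=0$ and hence $Y(p_0)$ is a multiple of $\zeta(p_0)$. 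Concretely: writing $\bar\zeta=\phi\zeta+Z$, the induced transversal connection acquires a term involving $d\phi$ and $h(Z,\cdot)$; demanding $\bar\tau=0$ at $p_0$ given $\tau(p_0)=0$ kills the $h(Z,\cdot)$ contribution, and non-degeneracy of $h$ gives $Z(p_0)=0$.

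**Converse and main obstacle.**
Conversely, if $\zeta(p_0)$ is a multiple of the Blaschke normal, then at $p_0$ the induced cubic form $\nabla h$ coincides (up to the scaling constant) with the Blaschke cubic form, which is apolar by the fundamental property of the Blaschke normalization; hence $\zeta$ is apolar at $p_0$. I expect the main obstacle to be bookkeeping the transversal-change formulas carefully enough to see that the hypothesis $D_X\zeta$ tangent at $p_0$ (rather than on a neighborhood) suffices: everything must be read off at the single point $p_0$, so I would differentiate the defining relations only once and evaluate, being careful that terms like $X(\phi)$ survive but are irrelevant to the tangential component $Z$, whereas the obstruction $h(Z,\cdot)$ is pointwise and killed by non-degeneracy. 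A clean way to avoid heavy computation is to invoke the characterization already used implicitly in the paper (cf. the Blaschke-normalization facts in \cite{Nomizu}): an equiaffine transversal field is a constant multiple of the affine normal iff it is apolar — the content here is just that "$D_X\zeta$ tangent at $p_0$" is precisely "equiaffine at $p_0$", so the stated lemma is the pointwise version of that classical equivalence.
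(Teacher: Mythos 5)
Your overall strategy is the same as the paper's: write the given transversal field and the Blaschke normal in terms of each other via the change-of-transversal formulas and use the apolarity of the Blaschke normalization. However, the concrete mechanism you give for the key step of the forward direction is wrong. Writing $\tilde\zeta=\phi\zeta+Z$ with $Z$ tangent, the induced transversal forms are related by
$$\tilde\tau(X)=\tau(X)+X(\log\phi)+\tfrac{1}{\phi}h(X,Z).$$
Imposing $\tilde\tau=0$ and $\tau(p_0)=0$ therefore yields $h(X,Z)(p_0)=-X(\phi)(p_0)$ (normalizing $\phi(p_0)=1$); it relates $h(Z,\cdot)$ to $d\phi$ but does \emph{not} ``kill the $h(Z,\cdot)$ contribution,'' and non-degeneracy of $h$ gives you nothing here. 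Indeed, at a single point there is a whole hyperplane of transversal directions that are equiaffine at $p_0$, precisely because any tangential component can be absorbed into $d\phi$. The paper derives exactly this relation (in the form $X_k(\delta)=-\delta\sum_l a_l h(X_k,X_l)$) and does not conclude from it that the tangential part vanishes.

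What actually forces $Z(p_0)=0$ is the comparison of the apolarity conditions, which you state at the level of the conclusion but never compute. Under $\tilde\zeta=\phi\zeta+Z$ the cubic form changes by the symmetrization of $h\otimes h(Z,\cdot)$, so its $h$-trace changes by $(n+2)h(X,Z)$ up to normalization; combined with the pointwise relation above, apolarity of both cubic forms at $p_0$ gives $h(X,Z)(p_0)=0$ for all $X$, and only then does non-degeneracy yield $Z(p_0)=0$. This is the content of the paper's identity $0=\sum_i C(X_k,X_i,X_i)-(n+2)\delta a_k$. Your converse direction is essentially fine (the correction to the cubic form is pointwise in $Z$, so $Z(p_0)=0$ transfers apolarity), but to repair the forward direction you must carry out the trace computation rather than attribute the conclusion to the equiaffinity matching.
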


\begin{proof}
Let $\tilde\zeta$ denote the affine Blaschke normal vector field and 
$$
\zeta=\sum_{l=1}^na_lX_l+\delta\tilde\zeta.
$$
Writing 
$$
D_{X_i}X_j=\nabla_{X_i}X_j+h(X_i,X_j)\zeta=\tilde\nabla_{X_i}X_j+\tilde{h}(X_i,X_j)\zeta,
$$
we conclude that $\tilde{h}(X_i,X_j)=\delta {h}(X_i,X_j)$ and
$$
\tilde\Gamma_{ij}^l =\Gamma_{ij}^l+h(X_i,X_j)a_l.
$$
Moreover, since $D_{X_k}\zeta$ is tangent to $S$ at $p_0$, we conclude that, at $p_0$,
$$
X_k(\delta)=-\delta\sum_{l=1}^n a_lh(X_k,X_l).
$$
Choose a basis $\{X_1,..,X_n\}$ $h$-orthonormal. Then 
$$
\tilde{C}(X_k,X_i,X_i)=X_k(\delta)-2\delta\tilde{\Gamma}_{ik}^i,\ \ C(X_k,X_i,X_i)=-2\Gamma_{ik}^i.
$$
Since $\tilde{C}$ is apolar,
$$
0=\sum_{i=1}^n C(X_k,X_i,X_i)-(n+2)\delta a_k.
$$
Thus $C$ is apolar at $p_0$ if and only if $a_k(p_0)=0$, for $1\leq k\leq n$, which is equivalent to to $\zeta$ multiple of $\tilde\zeta$.
\end{proof}

\begin{thm}
The affine normal plane coincides with the Transon plane if and only if $\xi$ is parallel.
\end{thm}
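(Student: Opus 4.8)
The plan is to combine the ingredients assembled in this section: Lemma~\ref{lemma:lema1}, which places the Blaschke normal $\tilde\eta$ of the projected submanifold $\bar N$ in the Transon plane; Lemma~\ref{lemma:bar}, which identifies apolarity of $\eta$ with apolarity of $\bar\eta$; and Lemma~\ref{lemma:BlaschkeNormal}, the general criterion on codimension $1$ immersions. First I would work at a fixed but arbitrary point $p_0\in N$, take $\xi$ a vector field in the osculating Darboux direction normalized so that $\xi(p)=\xi(p_0)+\e(p)$ with $\e(p)$ in the fixed hyperplane $H=T_{p_0}N\oplus\langle\text{some vector}\rangle$, and project along the constant direction $\xi(p_0)$ to get $\bar N\subset H$, a hypersurface of $H\cong\R^{n+1}$. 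Let $\eta$ be the transversal vector field in the affine normal plane bundle $\sigma(\xi)$ that is parallel to $H$, and $\bar\eta$ its push-forward to $\bar N$. By construction $\bar\eta$ is a transversal vector field along $\bar N$ in $H$.

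Next I would check the hypotheses of Lemma~\ref{lemma:BlaschkeNormal} applied to $S=\bar N\subset H$ and $\zeta=\bar\eta$. The condition "$D_X\bar\eta$ tangent to $\bar N$ at $p_0$" should follow from the fact that $D_X\eta$ is tangent to $M$ for all $X\in TN$ (this is how $\eta$ was defined in section~\ref{sec:AffineNormalBundle}, via $\tau_1^2=\tau_2^2=0$), together with the computation in the proof of Lemma~\ref{lemma:bar} relating $\bar\nabla$ and $\nabla$ at $p_0$: differentiating $\pi_*\eta$ and using $X(\lambda)(p_0)=0$ shows that the tangential-to-$M$ part of $D_X\eta$ projects to the tangential-to-$\bar N$ part of $D_{\bar X}\bar\eta$, while the $\xi$-component is killed by the projection. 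Once this is verified, Lemma~\ref{lemma:BlaschkeNormal} gives the equivalence: $\bar\eta(p_0)$ is a multiple of the affine Blaschke normal $\tilde\eta$ of $\bar N$ $\iff$ $\bar\eta$ is apolar at $p_0$. By Lemma~\ref{lemma:bar} apolarity of $\bar\eta$ is the same as apolarity of $\eta$, and by Proposition~\ref{prop:EquivApolar} apolarity of $\eta$ (equivalently $(C^2,h^2)$ apolar) is equivalent to $\xi$ parallel.

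It then remains to tie "$\bar\eta(p_0)$ a multiple of $\tilde\eta$" to "the affine normal plane equals the Transon plane." Here I would argue as follows. The affine normal plane $\sigma(\xi)(p_0)$ is spanned by $\xi(p_0)$ and $\eta(p_0)$; its image under $\pi_*$ is the line spanned by $\bar\eta(p_0)$ (since $\xi(p_0)$ is the projection direction). On the other hand, by Lemma~\ref{lemma:lema1} the Transon plane contains $\tilde\eta(p_0)$, and it also contains $\xi(p_0)$ — this last point I should justify: every hyperplane section $M\cap H'$ with $T\subset H'$ produces an affine normal lying in a plane containing the common Darboux direction, and $\xi(p_0)$ is the osculating Darboux direction of $N\subset M$ at $p_0$ which coincides with that of every section (cf.\ the remark after Theorem~\ref{thm:TransonCurves} and the proof of Theorem~\ref{thm:Transon}). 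Both the affine normal plane and the Transon plane are $2$-planes containing the line $\langle\xi(p_0)\rangle$, so they coincide iff their images under $\pi_*$ (both lines in $H$) coincide, i.e.\ iff $\bar\eta(p_0)$ is a multiple of $\tilde\eta(p_0)$. Chaining the equivalences gives the theorem.

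The main obstacle I anticipate is the bookkeeping around the projection $\pi$: one must be careful that $\eta$ is normalized to lie in $H$ (not merely in $\sigma(\xi)$, which is only determined up to adding multiples of $\xi$), that the hypothesis $D_X\bar\eta\in T_{p_0}\bar N$ really does hold rather than just $D_X\bar\eta\in T_{p_0}\bar N$ up to a $\xi(p_0)$-term that the projection happens to absorb, and that "the affine normal plane equals the Transon plane" is interpreted pointwise and uniformly in $p_0$. None of these is deep, but getting the normalizations consistent with sections~\ref{sec:AffineNormalBundle} and~\ref{sec:VectorFields} is where the care is needed; the genuinely substantive inputs (Lemmas~\ref{lemma:lema1}, \ref{lemma:bar}, \ref{lemma:BlaschkeNormal} and Proposition~\ref{prop:EquivApolar}) are already in hand.
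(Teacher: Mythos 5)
Your proposal is correct and follows essentially the same route as the paper's proof: the chain $\xi$ parallel $\Leftrightarrow$ $\eta$ apolar (Proposition~\ref{prop:EquivApolar}) $\Leftrightarrow$ $\bar\eta$ apolar (Lemma~\ref{lemma:bar}) $\Leftrightarrow$ $\bar\eta$ proportional to $\tilde\eta$ (Lemma~\ref{lemma:BlaschkeNormal}), closed off by Lemma~\ref{lemma:lema1}. The additional care you take in the final step (both $2$-planes contain $\xi(p_0)$, so their coincidence reduces to $\bar\eta(p_0)$ being a multiple of $\tilde\eta(p_0)$) and in checking the hypotheses of Lemma~\ref{lemma:BlaschkeNormal} only makes explicit what the paper leaves implicit.
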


\begin{proof} 
By proposition \ref{prop:EquivApolar}, $\xi$ is parallel at $p_0$ if and only if $\eta$ is apolar at $p_0$. By lemma \ref{lemma:bar}, $\eta$ is apolar at $p_0$ if and only if 
$\bar\eta$ is apolar at $p_0$. From lemma \ref{lemma:BlaschkeNormal}, $\bar\eta$ is a multiple of $\tilde\eta$ at $p_0$ if and only if $\bar\eta$
is apolar at $p_0$. Finally $\tilde\eta$ belongs to the Transon plane, by lemma \ref{lemma:lema1}.
\end{proof}

\section{Existence of parallel vector fields}

\subsection{Submanifolds that admit parallel vector fields}

In this section we characterize the submanifolds $N$ that admit a parallel vector field $\xi$. We begin with the following lemma:

\begin{lem}\label{lemma:TauExact}
There exists a parallel vector field $\xi$  if and only if $\tau_1^1$ is exact.
\end{lem}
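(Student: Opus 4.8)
The plan is to translate the parallelism condition into a statement about the $1$-form $\tau_1^1$. Recall that a vector field $\xi$ in the osculating Darboux direction is parallel precisely when $\tau_1^1=0$ (this is the defining equation $D_X\xi=-S_1X+\tau_1^1(X)\xi+\tau_1^2(X)\eta$ together with the fact that $\tau_1^2$ automatically vanishes when $\eta$ is taken in the affine normal plane bundle, or more directly: $\xi$ parallel means the $\xi$-component of $D_X\xi$ together with all transversal components vanish except the tangential part). So the whole question reduces to understanding how $\tau_1^1$ changes when we rescale $\xi$.

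First I would compute the transformation of $\tau_1^1$ under $\bar\xi=\rho\,\xi$ for a positive scalar function $\rho:N\to\R$. Differentiating, $D_X\bar\xi = X(\rho)\,\xi + \rho\,D_X\xi = X(\rho)\,\xi + \rho(-S_1X+\tau_1^1(X)\xi+\cdots)$, so the coefficient of $\bar\xi=\rho\xi$ in $D_X\bar\xi$ is $\bar\tau_1^1(X) = \tau_1^1(X) + \dfrac{X(\rho)}{\rho} = \tau_1^1(X) + X(\log\rho) = \tau_1^1(X) + d(\log\rho)(X)$. Hence $\bar\tau_1^1 = \tau_1^1 + d(\log\rho)$. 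Therefore a parallel vector field exists iff one can choose $\rho$ with $\tau_1^1 + d(\log\rho)=0$, i.e. iff $\tau_1^1 = -d(\log\rho) = d(\log(1/\rho))$ for some positive function $\rho$; equivalently $\tau_1^1$ is exact (writing $\tau_1^1=df$ and taking $\rho=e^{-f}$).

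Two small points need care. First, I should check that $\tau_1^1$ is a globally well-defined $1$-form on $N$ independent of the auxiliary choices (the transversal $\eta$, the frame $\{X_i\}$): since changing $\eta$ by $\bar\eta=\eta+\lambda\xi$ or modifying it within the affine normal plane bundle does not affect the $\xi$-component of $D_X\xi$, the form $\tau_1^1$ depends only on the chosen $\xi$; and the computation above shows precisely how it depends on $\xi$, namely only up to an exact form, so the \emph{cohomology class} $[\tau_1^1]\in H^1_{dR}(N)$ is an invariant of the immersion $N\subset M$. Second, I should note that exactness of $\tau_1^1$ on $N$ is the intrinsic way to phrase "$\tau_1^1$ admits a primitive", which is exactly what is needed to solve $d(\log\rho)=-\tau_1^1$ with $\rho>0$ (on a connected $N$; if $N$ is disconnected one argues component by component, and $\log\rho$ is determined up to an additive constant per component, reflecting the fact that $\xi$ is determined up to a constant positive multiple). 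I do not expect any real obstacle here — the only thing to get right is the sign and the logarithmic derivative in the transformation law for $\tau_1^1$; everything else is immediate once that formula is in hand.
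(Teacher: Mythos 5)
Your proposal is correct and follows essentially the same route as the paper: fix a reference field $\xi_0$, observe that rescaling by $\rho$ changes $\tau_1^1$ by $d(\log\rho)$, and solve $\tau_1^1=-d(\log\rho)$, which is possible exactly when $\tau_1^1$ is exact. The paper's proof is the same computation written as the condition $X(\lambda)+\lambda\tau_1^1(X)=0$ for $\xi=\lambda\xi_0$; your added remarks on well-definedness of $\tau_1^1$ and positivity of $\rho$ are correct but not needed beyond what the paper records.
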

\begin{proof}
Fix a vector field $\xi_0$ in the osculating Darboux direction and look for $\lambda$ such that $\xi=\lambda\xi_0$ is parallel.
Differentiating this equation we obtain
$$
D_X(\xi)=X(\lambda)\xi_0 +\lambda\left( -SX+\tau_1^1(X)\xi_0\right)=-\lambda SX+(X(\lambda)+\lambda\tau_1^1(X))\xi_0.
$$
Then $X(\lambda)+\lambda\tau_1^1(X)=0$, for any $X\in T_pN$, if and only if $\tau_1^1=-d(\log(\lambda))$.
\end{proof}

Since $D_X\xi$ is tangent to $M$, we can write
\begin{equation}\label{eq:NormalConnection}
D_X\xi=-S_1X+\nabla^{\perp}_X\xi,
\end{equation}
where $\nabla^{\perp}_X\xi=\tau_1^1(X)\xi$ is the affine normal connection.

Consider the normal bundle connection $\nabla^{\perp}$ defined by equation \eqref{eq:NormalConnection}.
The normal curvature $R^{\perp}$ is defined as
$$
R^{\perp}(X,Y)\xi=\nabla^{\perp}_Y\nabla^\perp_X\xi-\nabla^{\perp}_X\nabla^\perp_Y\xi+\nabla^{\perp}_{[X,Y]}\xi.
$$
We say that the normal bundle is {\it flat} if $R^{\perp}=0$, for any $X,Y\in T_pN$, $\xi=\lambda\xi_0$  (see \cite{doCarmo}, ch.6).

\begin{prop}
There exists a parallel vector field $\xi$ if and only the normal bundle is flat.
\end{prop}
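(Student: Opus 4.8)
The plan is to connect the two characterizations already in hand: by Lemma~\ref{lemma:TauExact}, a parallel vector field exists precisely when the $1$-form $\tau_1^1$ (attached to a fixed background field $\xi_0$) is exact, and the task is to show this is equivalent to flatness of the normal bundle connection $\nabla^{\perp}$ from equation~\eqref{eq:NormalConnection}. Since the normal bundle here is a line bundle (it is spanned by the single direction $\xi$), its connection is determined by the scalar $1$-form $\tau_1^1$ via $\nabla^{\perp}_X\xi_0=\tau_1^1(X)\xi_0$, and the standard computation of curvature of a line-bundle connection reduces to the exterior derivative of that $1$-form.

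First I would compute $R^{\perp}(X,Y)\xi_0$ directly from the definition. Using $\nabla^{\perp}_X\xi_0=\tau_1^1(X)\xi_0$ one gets, for any vector fields $X,Y$ tangent to $N$,
\begin{equation*}
R^{\perp}(X,Y)\xi_0 = \left( X(\tau_1^1(Y)) - Y(\tau_1^1(X)) - \tau_1^1([X,Y]) \right)\xi_0 = d\tau_1^1(X,Y)\,\xi_0 .
\end{equation*}
(One must be mildly careful with the sign conventions in the definition of $R^{\perp}$ given in the excerpt, where the roles of $X$ and $Y$ are swapped relative to the usual convention and there is a $+\nabla^{\perp}_{[X,Y]}$ term, but either way the result is $\pm d\tau_1^1$ times $\xi_0$, and the antisymmetrization is exactly the exterior derivative.) Hence $R^{\perp}=0$ identically if and only if $d\tau_1^1=0$, i.e.\ $\tau_1^1$ is closed.

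Next I would observe that the definition of flatness quantifies over all $\xi=\lambda\xi_0$, but since the bundle is one-dimensional this adds nothing new: replacing $\xi_0$ by $\lambda\xi_0$ changes $\tau_1^1$ by the exact form $d(\log|\lambda|)$, which does not affect $d\tau_1^1$; so flatness for one nonvanishing section is equivalent to flatness for all. Finally I would combine this with Lemma~\ref{lemma:TauExact}: a parallel field exists iff $\tau_1^1$ is exact, which trivially implies $\tau_1^1$ closed, hence the normal bundle is flat; conversely, if the normal bundle is flat then $\tau_1^1$ is closed, so it is locally exact, which gives a local parallel field. (If a global statement is intended one either works locally, as the rest of the paper does, or invokes that $H^1$ vanishes on a simply connected domain; I would phrase the conclusion to match the local convention used throughout the excerpt.)

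The only genuine obstacle I anticipate is bookkeeping with the sign/ordering convention in the stated definition of $R^{\perp}$ and making sure the claimed identity $R^{\perp}(X,Y)\xi_0 = d\tau_1^1(X,Y)\xi_0$ comes out with whatever sign the paper wants; the conceptual content — curvature of a line-bundle connection equals the exterior derivative of the connection $1$-form, and exact versus closed coincide locally — is routine. So the proof is essentially: expand $R^{\perp}$, recognize $d\tau_1^1$, invoke the Poincaré lemma, and quote Lemma~\ref{lemma:TauExact}.
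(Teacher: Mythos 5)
Your proposal is correct and follows essentially the same route as the paper: compute $R^{\perp}(X,Y)\xi_0=d\tau_1^1(X,Y)\xi_0$ directly from the definition and then invoke Lemma~\ref{lemma:TauExact}. Your explicit remark that passing from ``$\tau_1^1$ closed'' to ``$\tau_1^1$ exact'' requires the Poincar\'e lemma (hence a local statement) is a point the paper leaves implicit, and is a welcome clarification rather than a divergence.
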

\begin{proof}
Observe that $\nabla^{\perp}_X\xi_0=\tau_1^1(X)\xi_0$. Thus
$$
\nabla^{\perp}_Y\nabla^\perp_X\xi=Y\tau_ 1^1(X)\xi_0+\tau_ 1^1(X)\tau_1^1(Y)\xi_0.
$$
Now straightforward calculations shows that
$$
R^{\perp}(X,Y)\xi_0=\left( Y\tau_1^1(X)-X\tau_1^1(Y)+\tau_1^1([X,Y]) \right) \xi_0=d\tau_1^1(X,Y)\xi_0.
$$
Using lemma \ref{lemma:TauExact} we prove the proposition.
\end{proof}

\subsection{Example: A submanifold without a parallel vector field.}

We now give an explicit example of an immersion $N\subset M$ that does not admit a parallel vector field.

\begin{exam}
Take $M$ to be the graph of
$$
f(x_1,x_2,y)=\frac{1}{2}\left( x_1^2+x_2^2+y^2 \right) +\frac{1}{2}\left( k_1x_1^2y+k_2x_2^2y   \right)
$$
and $N$ given by the intersection of $M$ with $y=x_1x_2$. Thus $N$ can be parameterized by
$$
\phi(x_1,x_2)=\left( x_1, x_2, x_1x_2, f(x_1,x_2,x_1x_2)  \right).
$$
Let $X_1=\phi_{x_1}$ and $X_2=\phi_{x_2}$.
Then the vector field $\xi(x_1,x_2)=(\xi_1,\xi_2,\xi_3,\xi_4)$,
\begin{align*}
\xi_1&=        k_2^2x_2^3-k_2 x_1x_2^2 -2k_1k_2x_1^2x_2 -k_1x_1-x_2      \\
\xi_2&=    k_1^2x_1^3-k_1 x_1^2x_2 -2k_1k_2x_1x_2^2 -k_2x_2-x_1       \\
\xi_3&=      1+2(k_1+k_2)x_1x_2+3k_1k_2x_1^2x_2^2          \\
\xi_4&= -\frac{1}{2}\left( k_1x_1^2+2x_1x_2+k_2x_2^2 \right)+(k_1^2-k_1k_2)x_1^3x_2+(k_2^2-k_1k_2)x_1x_2^3 \\
&+\frac{1}{2}k_1k_2x_2^2x_1^2\left( k_1x_1^2+2x_1x_2+k_2x_2^2 \right)
\end{align*}
is tangent to $M$ and $D_{X_i}\xi\in T_pM$, for $i=1,2$.
Moreover,
\begin{align*}
\tau_1^1(X_1)(x_1,x_2)&=x_1+(3k_1+2k_2)x_2+O(3)\\
\tau_1^1(X_2)(x_1,x_2)&=(2k_1+3k_2)x_1+x_2+O(3)
\end{align*}
We conclude that $d\tau_1^1(X_1,X_2)(0,0)=k_1-k_2\neq 0$ and thus $\tau$ is not a closed form.
\end{exam}
\bibliographystyle{amsplain}

\end{document}